\chardef\bslash=`\\ 
\newtheorem{thm}{Theorem}[section]
\newtheorem{lem}[thm]{Lemma}
\newtheorem{prop}[thm]{Proposition}
\theoremstyle{definition}
\newtheorem{defn}[thm]{Definition}
\newtheorem{qtn}[thm]{Question}
\theoremstyle{remark}
\newcommand{\eval}[2][\right]{\relax
  \ifx#1\right\relax \left.\fi#2#1\rvert}
\begin{document}
\title[Diameter, decomposability, and Minkowski sums]{Diameter, decomposability, and Minkowski~sums~of~polytopes}

\author[Antoine Deza]{Antoine Deza}
\address{McMaster University, Hamilton, Ontario, Canada}
\email{deza@mcmaster.ca} 

\author[Lionel Pournin]{Lionel Pournin}
\address{LIPN, Universit{\'e} Paris 13, Villetaneuse, France}
\email{lionel.pournin@univ-paris13.fr} 

\begin{abstract}
We investigate how the Minkowski sum of two polytopes affects their graph and, in particular, their diameter. We show that the diameter of the Minkowski sum is bounded below by the diameter of each summand and above by, roughly, the product between the diameter of one summand and the number of vertices of the other. We also prove that both bounds are sharp. In addition, 
we obtain a result on polytope decomposability. More precisely,
given two polytopes $P$ and $Q$, we show that $P$ can be written as a Minkowski sum with a summand homothetic to $Q$ if and only if $P$ has the same number of vertices as its Minkowski sum with $Q$.
\end{abstract}
\maketitle

\section{Introduction}\label{sec.DFP.0}
The Minkowski sum of two subsets of an Euclidean space is obtained by summing each element of one subset with each element of the other. The Minkowski sum of $P$ and $Q$ is denoted by $P+Q$. This operation turns up in a large number of different contexts ranging from the Brunn-Minkowski theorem to applications in civil engineering or motion planning. The special case when $P$ and $Q$ are polytopes is of particular interest. It is a model for the combinatorics of prismatoids used by Santos to disprove the Hirsch conjecture \cite{Santos2012}. The face lattice of $P+Q$, and in particular its vertex set, has been studied by Fukuda and Weibel~\cite{FukudaWeibel2007}. Recently, a sharp upper bound on the number of faces of $P+Q$ has been obtained by Adiprasito and Sanyal \cite{AdiprasitoSanyal2016}. The question of the decomposability of a polytope, that is, whether it can be obtained as the Minkowski sum of two non-homothetic polytopes has been considered in~\cite{Kallay1982,Meyer1974,PrzeslawskiYost2008,Shephard1963}. Among polytopes, the case of zonotopes is particularly interesting. These polytopes are the Minkowski sums of line segments. Zonotopes are conjectured, for any pair of positive integers $d$ and $k$, to achieve the largest possible diameter over all the $d$-dimensional polytopes whose vertices have integer coordinates ranging from $0$ to $k$ \cite{DezaManoussakisOnn2018}. Here, by the \emph{diameter of a polytope}, we mean the diameter of the graph of a polytope, made up of its vertices and edges.
We refer to the textbooks by Fukuda~\cite{Fukuda2015}, Gr\"unbaum~\cite{Grunbaum2003}, and Ziegler~\cite{Ziegler1995} 
for comprehensive introductions on polytopes, Minkowski sums, and zonotopes. 

Here, we focus on the possible diameter of (the graph of) the Minkowski sum of two polytopes. While this diameter is bounded below by the diameters of each summand, we will observe that it can grow arbitrarily large even when the diameter of both summands is fixed. In fact, we will prove that this diameter cannot exceed, roughly, the product between the diameter of one summand and the number of vertices of the other. We will also show that this upper bound is sharp when the diameter and the number of vertices of both summands grow large. Along the way, we obtain a result on the decomposability of a polytope into a Minkowski sum. If $P$ is the Minkowski sum of two polytopes $Q$ and $R$, we say that $Q$ and $R$ are \emph{summands} of $P$. A polytope that is not homothetic to at least one of its summands is called \emph{decomposable} \cite{Shephard1963}. We will show that a polytope $P$ has a summand homothetic to a polytope $Q$ if and only if $P$ and $P+Q$ have the same number of vertices. 
This allows for a convenient way to check polytope decomposability, especially in the case of lattice polytopes.

The article is based on a couple of propositions from \cite{Fukuda2015}, which we recall and extend in Section \ref{sec.DFP.1}. Our result on polytope decomposability is given as a conclusion to Section~\ref{sec.DFP.1}. The question on the diameter of Minkowski sums is addressed in Sections \ref{sec.DFP.2} and \ref{sec.DFP.4}. The bounds on that diameter are given in Section \ref{sec.DFP.2} and the proof that the upper bound is sharp in Section \ref{sec.DFP.4}.

\section{Some properties of the Minkowski sum of polytopes}\label{sec.DFP.1}

In the following, each time a Minkowski sum of two polytopes is considered, it is implicitly assumed that these polytopes are both contained in the same ambient Euclidean space. Note that we will make heavy use of linear maps of the form $x\mapsto{c\mathord{\cdot}x}$. 
In this notation, $c$ and $x$ are vectors in the considered ambient space and $c\mathord{\cdot}x$ denotes their scalar product.

The following Lemma is borrowed from \cite{Fukuda2015}. It is in some sense our starting point. In particular, most of our results are based on it.

\begin{lem}[{\cite[Proposition 12.1]{Fukuda2015}}]\label{Lem.DFP.1.1}
For any subset $F$ of a polytope $P$ and any subset $G$ of a polytope $Q$, $F+G$ is a face of $P+Q$ if and only if
\begin{itemize}
\item[$(i)$] $F$ and $G$ are faces of $P$ and $Q$, respectively,
\item[$(ii)$] there exists a vector $c$ such that the map $x\mapsto{c\mathord{\cdot}x}$ is minimized exactly at $F$ in $P$ and exactly at $G$ in $Q$.
\end{itemize}
\end{lem}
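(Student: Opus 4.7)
The plan is to reduce both directions to the standard characterization that a subset of a polytope is a face exactly when it is the set of minimizers of some linear functional $x\mapsto c\cdot x$.

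For the ``if'' direction, suppose $F$ and $G$ are faces and let $c$ be the vector provided by~(ii). Set $\alpha=\min_{x\in P}c\cdot x$, attained precisely on $F$, and $\beta=\min_{x\in Q}c\cdot x$, attained precisely on $G$. For any $p\in P$ and $q\in Q$, linearity of the scalar product gives
\[
c\cdot(p+q)=c\cdot p+c\cdot q\geq\alpha+\beta,
\]
with equality if and only if $p\in F$ and $q\in G$. Thus $F+G$ is exactly the minimizing set of $x\mapsto c\cdot x$ on $P+Q$, so $F+G$ is a face of $P+Q$, and the same vector $c$ already witnesses~(ii).

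For the ``only if'' direction, assume that $F+G$ is a face of $P+Q$ and pick a vector $c$ whose minimizing set on $P+Q$ is $F+G$. Introduce $F^{\star}=\arg\min_{x\in P}c\cdot x$ and $G^{\star}=\arg\min_{x\in Q}c\cdot x$; these are faces of $P$ and $Q$. The computation above, applied to the pair $(F^{\star},G^{\star})$, shows that $F^{\star}+G^{\star}$ is also the face of $P+Q$ minimizing $c\cdot x$, so $F^{\star}+G^{\star}=F+G$. Furthermore, for any $f\in F$ and $g\in G$, the equality $c\cdot f+c\cdot g=\alpha+\beta$ together with the individual inequalities $c\cdot f\geq\alpha$ and $c\cdot g\geq\beta$ forces $f\in F^{\star}$ and $g\in G^{\star}$, yielding $F\subseteq F^{\star}$ and $G\subseteq G^{\star}$. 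Matching these inclusions with the equality $F+G=F^{\star}+G^{\star}$ then identifies $F$ with $F^{\star}$ and $G$ with $G^{\star}$, which gives~(i) and~(ii) simultaneously, with the same witnessing vector~$c$.

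The step I expect to require the most care is the final identification in the ``only if'' part: while the inclusions $F\subseteq F^{\star}$ and $G\subseteq G^{\star}$ together with $F+G=F^{\star}+G^{\star}$ come essentially for free, concluding that $F=F^{\star}$ and $G=G^{\star}$ relies on the uniqueness of the decomposition of a face of $P+Q$ as a Minkowski sum of faces of $P$ and $Q$. Everything else is a direct consequence of the linearity of $x\mapsto c\cdot x$ and of the face/linear-functional dictionary.
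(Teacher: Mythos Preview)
The paper does not prove this lemma; it is quoted verbatim from Fukuda's lecture notes, so there is no in-paper argument to compare against.

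Your ``if'' direction is correct and standard. The ``only if'' direction, however, has a genuine gap precisely where you flag it. Your proposed fix---invoking the uniqueness of the Minkowski decomposition of a face of $P+Q$ into faces of $P$ and $Q$---is doubly problematic. First, in the paper that uniqueness is stated as a \emph{consequence} of the present lemma (see the sentence immediately following it), so the appeal is circular. Second, and more seriously, at that point $F$ and $G$ are arbitrary subsets, not yet known to be faces, so a uniqueness statement about decompositions into faces does not even apply to them.

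In fact the ``only if'' direction is false as literally stated for arbitrary subsets. Take $P=[0,1]^2$, $Q=[0,1]\times\{0\}$, $F=\{(0,0),(1,0)\}$, and $G=Q$. Then $F+G=[0,2]\times\{0\}$ is the bottom edge of $P+Q=[0,2]\times[0,1]$, a proper face, yet $F$ is not a face of $P$ and no linear functional is minimized on $P$ exactly at the two-point set $F$.

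The intended statement is presumably the one in which $F$ and $G$ are faces from the outset (so (i) is a hypothesis and only (ii) is in question), or equivalently that every face of $P+Q$ decomposes uniquely as a sum of faces with a common supporting vector; this is how the paper actually uses the lemma. Under that reading your argument can be completed, but by Minkowski cancellation rather than by the circular appeal: once $F$ and $G$ are convex, the chain
\[
F^\star+G^\star=F+G\subseteq F^\star+G\subseteq F^\star+G^\star
\]
forces $F^\star+G=F^\star+G^\star$, and the cancellation law for compact convex sets (e.g.\ via support functions) yields $G=G^\star$, and symmetrically $F=F^\star$.
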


By this lemma, given two polytopes $P$ and $Q$, a face $X$ of their Minkowski sum can always be written as the Minkowski sum of a unique face $F$ of $P$ and a unique face $G$ of $Q$. In the sequel, the expression $F+G$ will be referred to as the \emph{Minkowski decomposition} of $X$.
\begin{figure}
\begin{centering}
\includegraphics{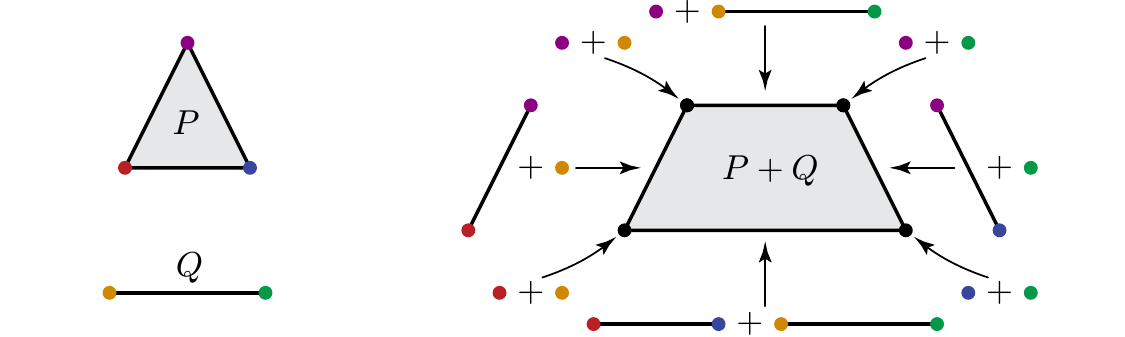}
\caption{The Minkowski sum of a triangle and a line segment.}\label{Fig.DFP.0}
\end{centering}
\end{figure}
Lemma \ref{Lem.DFP.1.1} is illustrated on Fig. \ref{Fig.DFP.0} with the Minkowski sum of a triangle $P$ and a line segment $Q$, where the Minkowski decomposition of each proper face of $P+Q$ is indicated by an arrow. Note, for instance that, when $c$ is a vertical vector pointing down, the map $x\mapsto{c\mathord{\cdot}x}$ is minimized, in $P$, at the purple vertex and, in $Q$, at $Q$ itself. The sum of these two faces is the line segment at the top of $P+Q$. The following lemma, also borrowed from \cite{Fukuda2015} tells how Minkowski sums affect vertex adjacency.

\begin{lem}[{\cite[Proposition 12.4]{Fukuda2015}}]\label{Lem.DFP.1.2}
Let $P$ and $Q$ be two polytopes. If $u$ and $v$ are adjacent vertices of $P+Q$ with Minkowski decompositions $u_P+u_Q$ and $v_P+v_Q$, respectively, then $u_P$ and $v_P$ are either adjacent vertices of $P$, or they coincide. Similarly, $u_Q$ and $v_Q$ are adjacent vertices of $Q$, or they coincide.
\end{lem}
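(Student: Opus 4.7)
The plan is to apply \lemref{Lem.DFP.1.1} to the edge $[u,v]$ of $P+Q$ and then bound the dimensions of its summands. Since $[u,v]$ is a face of $P+Q$, \lemref{Lem.DFP.1.1} yields a Minkowski decomposition $[u,v]=F+G$, where $F$ is a face of $P$ and $G$ is a face of $Q$, together with a vector $c$ such that $x\mapsto{c\mathord{\cdot}x}$ is minimized exactly at $F$ on $P$ and exactly at $G$ on $Q$.

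I would next show that $u_P,v_P\in F$ and $u_Q,v_Q\in G$. Writing $m_P$ and $m_Q$ for the minima of $x\mapsto{c\mathord{\cdot}x}$ on $P$ and $Q$, the minimum of this functional over $P+Q$ separates into $m_P+m_Q$, because $c\mathord{\cdot}(p+q)=c\mathord{\cdot}p+c\mathord{\cdot}q$. Since $u\in F+G$ attains this minimum, the identity $c\mathord{\cdot}u=c\mathord{\cdot}u_P+c\mathord{\cdot}u_Q$ combined with $c\mathord{\cdot}u_P\geq m_P$ and $c\mathord{\cdot}u_Q\geq m_Q$ forces equality in both inequalities, so $u_P\in F$ and $u_Q\in G$; the same holds for $v$. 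Because $u_P$ and $v_P$ are extreme points of $P$ contained in the face $F$, they are vertices of $F$, and analogously $u_Q$ and $v_Q$ are vertices of $G$.

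The last step is a dimension count: $F+G$ contains a translate of $F$ and a translate of $G$, so $\max(\dim F,\dim G)\leq\dim(F+G)=1$. Hence $F$ is either a vertex or an edge of $P$. In the first case $u_P=v_P$; in the second case, the edge $F$ has exactly two vertices, so $\{u_P,v_P\}$ is a subset of a two-element set, and therefore $u_P$ and $v_P$ either coincide or are the two endpoints of $F$, that is, adjacent vertices of $P$. The same reasoning applied to $G$ gives the corresponding statement for $u_Q$ and $v_Q$. The only mildly subtle point is the separability step, which is what allows one to locate the summands of $u$ and $v$ inside $F$ and $G$; once that is granted, the remainder is a standard dimension and extreme-point argument.
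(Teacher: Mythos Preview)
Your proof is correct. Note, however, that the paper does not give its own proof of this lemma: it is quoted verbatim from \cite[Proposition 12.4]{Fukuda2015} and used as a black box, so there is no argument in the paper to compare against. Your approach---decomposing the edge $[u,v]$ via \lemref{Lem.DFP.1.1}, locating $u_P,v_P$ inside $F$ by the separability of the linear functional, and then bounding $\dim F\leq\dim(F+G)=1$---is the standard one and is essentially what one would find in Fukuda's text.
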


Observe that, for any vertex $u$ of a polytope $P$, and any polytope $Q$, there exists a vertex $v$ of $Q$ such that $u+v$ is a vertex of $P+Q$. Indeed, consider a vector $c$ such that the map $x\mapsto{c\mathord{\cdot}x}$ is uniquely minimized at $u$ in $P$. This map is also minimized at a face $F$ in $Q$. According to Lemma \ref{Lem.DFP.1.1}, $u+F$ is a face of $P+Q$, and the vertices of this face are precisely the Minkowski sums $u+v$ where $v$ is a vertex of $F$. Since the Minkowski decomposition of a vertex of $P+Q$ is unique, we immediately obtain Lemma \ref{Lem.DFP.1.3}.

\begin{lem}\label{Lem.DFP.1.3}
Let $P$ and $Q$ be two polytopes. There exists an injection $\phi$ from the vertex set of $P$ into the vertex set of $P+Q$ such that, for every vertex $u$ of $P$, $\phi(u)=u+v$, where $v$ is a vertex of $Q$.
\end{lem}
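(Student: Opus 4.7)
The plan is to build $\phi$ vertex by vertex, using exactly the construction sketched in the paragraph preceding the lemma, and then to extract injectivity from the uniqueness of the Minkowski decomposition. Given a vertex $u$ of $P$, I would first invoke a standard supporting hyperplane argument to produce a vector $c$ for which the linear functional $x\mapsto c\mathord{\cdot}x$ attains its minimum on $P$ exactly at the singleton $\{u\}$. The same functional, restricted to the (compact) polytope $Q$, attains its minimum on some nonempty face $G$ of $Q$. By Lemma~\ref{Lem.DFP.1.1} applied with $F=\{u\}$, the set $\{u\}+G$ is a face of $P+Q$, so any vertex $v$ of $G$ yields a vertex $u+v$ of this face, and hence a vertex of $P+Q$. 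I would then set $\phi(u)=u+v$ for any such choice of $v$ (for example, fix an auxiliary generic direction so the choice is canonical; no canonicity is actually needed).

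For injectivity, I would use uniqueness of the Minkowski decomposition of faces of $P+Q$, which was already observed right after Lemma~\ref{Lem.DFP.1.1}. Suppose $\phi(u)=\phi(u')$ for two vertices $u,u'$ of $P$, and write $\phi(u)=u+v$ and $\phi(u')=u'+v'$ with $v,v'$ vertices of $Q$. Then $u+v$ and $u'+v'$ are two expressions of the same vertex $w$ of $P+Q$ as a Minkowski sum of a face of $P$ (namely a vertex) and a face of $Q$ (again a vertex). By uniqueness, $u=u'$ and $v=v'$, which in particular gives $\phi(u)=\phi(u')\Longrightarrow u=u'$.

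There is no real obstacle: once Lemma~\ref{Lem.DFP.1.1} is available, both the existence of a suitable $v$ (handled by the supporting hyperplane construction together with applying Lemma~\ref{Lem.DFP.1.1} to $F=\{u\}$) and the injectivity (handled by the uniqueness observation) are essentially formal. The only point worth stating carefully in the write-up is that a vector $c$ uniquely minimizing $x\mapsto c\mathord{\cdot}x$ at a prescribed vertex $u$ of $P$ does exist, so that condition~$(ii)$ of Lemma~\ref{Lem.DFP.1.1} can be applied with $F=\{u\}$.
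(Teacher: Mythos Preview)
Your proposal is correct and follows essentially the same route as the paper: the argument given in the paragraph immediately preceding Lemma~\ref{Lem.DFP.1.3} is precisely the supporting-hyperplane construction together with Lemma~\ref{Lem.DFP.1.1}, and injectivity is drawn from the uniqueness of the Minkowski decomposition. Your write-up is simply a more detailed rendering of that same argument.
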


Consider a face $F$ of a polytope $P$. Recall that the normal cone of $P$ at $F$ is the set of all the vectors $c$ such that the map $x\mapsto{c\mathord{\cdot}x}$ is minimized, in $P$, at a face that contains $F$. The normal fan of $P$ is the complete polyhedral fan made up of the normal cones of $P$ at all of its faces. When the injection provided by Lemma \ref{Lem.DFP.5.1} between the vertex sets of $P$ and $P+Q$ is a bijection, we will show that their normal fans coincide. Note that, as an immediate consequence, the face lattices of these two polytopes are isomorphic.

\begin{lem}\label{Lem.DFP.5.1}
Consider two polytopes $P$ and $Q$. Let $\phi$ be an injection from the vertex set of $P$ to the vertex set of $P+Q$ such that, for every vertex $u$ of $P$, $\phi(u)=u+v$, where $v$ is a vertex of $Q$. If $\phi$ is a bijection, then the normal fan of $P$ coincides with the normal fan of $P+Q$.
\end{lem}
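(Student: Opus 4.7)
Our plan is to match the maximal cones of the two normal fans through $\phi$. Write $C_P(u)$, $C_Q(v)$, and $D_w$ for the normal cones of $P$, $Q$, and $P+Q$ at the respective vertices. A direct computation from the definitions yields a key identity: if a vertex $w$ of $P+Q$ has Minkowski decomposition $u+v$, then $D_w = C_P(u) \cap C_Q(v)$. Indeed, $c \cdot (u+v) \leq c \cdot (u'+v')$ for every $u' \in P$ and $v' \in Q$ separates (by setting $v'=v$ and then $u'=u$) into the conjunction $c \in C_P(u)$ and $c \in C_Q(v)$, and the converse is immediate.

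Next, we use the bijectivity of $\phi$ to deduce that for each vertex $u$ of $P$ there is a \emph{unique} vertex $v(u)$ of $Q$ with $u+v(u)$ a vertex of $P+Q$: existence is supplied by Lemma~\ref{Lem.DFP.1.3}, and if two distinct $v_1,v_2$ both worked, then $u+v_1$ and $u+v_2$ would be distinct vertices of $P+Q$ sharing the same $P$-part in their Minkowski decomposition, contradicting the fact that $\phi$ is a bijection between the vertex sets of $P$ and $P+Q$.

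The key step is to show $C_P(u) \subseteq C_Q(v(u))$. Pick $c$ in the interior of $C_P(u)$, so that $c \cdot x$ is uniquely minimized on $P$ at $u$. Let $w$ be a vertex of $P+Q$ at which $c \cdot x$ attains its minimum, and let $u_w + v_w$ be its Minkowski decomposition. By the key identity, $c \in D_w = C_P(u_w) \cap C_Q(v_w)$; in particular $c \in C_P(u_w)$, which combined with the unique minimization at $u$ forces $u_w = u$. The uniqueness of $v(u)$ then gives $v_w = v(u)$, so $c \in C_Q(v(u))$. Since $C_P(u)$ is full-dimensional its interior is dense in it, and $C_Q(v(u))$ is closed, so the inclusion $C_P(u) \subseteq C_Q(v(u))$ follows, yielding $C_P(u) = C_P(u) \cap C_Q(v(u)) = D_{\phi(u)}$.

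Thus the maximal cones of the normal fan of $P$ coincide, via $\phi$, with those of the normal fan of $P+Q$. Since any polyhedral fan is determined by its maximal cones (every lower-dimensional cone being a face of a maximal one), the two normal fans are equal. The main obstacle is promoting the trivial inclusion $D_{\phi(u)} \subseteq C_P(u)$ to an equality; the bijectivity hypothesis is precisely what powers the uniqueness argument above to deliver this.
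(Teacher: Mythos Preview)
Your proof is correct and takes a genuinely different route from the paper's. The paper first invokes Proposition~7.12 of \cite{Ziegler1995} to obtain that the normal fan of $P+Q$ refines that of $P$, and then builds a bijection between \emph{all} proper faces of $P$ and of $P+Q$ (via $F\mapsto F+\psi(F)$), concluding that a refinement with the same number of cones must be an equality. You instead work entirely at the level of maximal cones: from the identity $D_w=C_P(u)\cap C_Q(v)$ and the uniqueness of $v(u)$ forced by the bijectivity of $\phi$, you show directly that $C_P(u)=D_{\phi(u)}$ for every vertex $u$, and then use that a fan is determined by its maximal cones. Your argument is more self-contained (no appeal to the refinement result) and more economical, since it never leaves the vertex level; the paper's approach, on the other hand, produces the full face bijection $F\mapsto F+\psi(F)$ along the way, which carries some independent structural interest.
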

\begin{proof}
By Proposition 7.12 from \cite{Ziegler1995}, the normal fan of $P+Q$ refines the normal fan of $P$. In other words, the normal cones of $P+Q$ form polyhedral subdivisions of each of the normal cones of $P$. Hence, in order to prove the lemma, it suffices to exhibit a bijection between the two normal fans.

Consider a proper face $F$ of $P$. By Lemma \ref{Lem.DFP.1.1}, there exists a face $\psi(F)$ of $Q$ such that $F+\psi(F)$ is a face of $P+Q$. We first show that $\phi$ takes the vertex set of $F$ to the vertex set of $F+\psi(F)$. Let $u$ be a vertex of $F$. By Lemma \ref{Lem.DFP.1.3}, there exists a vertex $v$ of $\psi(F)$ such that $u+v$ is a vertex of $F+\psi(F)$. Since $\phi$ is a bijection from the vertex set of $P$ to the vertex set of $P+Q$, $u+v$ admits an antecedent by $\phi$ and this antecedent is, by definition, $u$ itself. Hence $\phi(u)$ is indeed a vertex of $F+\psi(F)$. As any vertex of $F+\psi(F)$ is obtained as the Minkowski sum of a vertex of $F$ with a vertex of $\psi(F)$, this shows that $\phi$ takes the vertex set of $F$ precisely to the vertex set of $F+\psi(F)$. As a consequence, $\psi(F)$ is the only possible face of $Q$ such that $F+\psi(F)$ is a face of $P+Q$. Since every face of $P+Q$ is the Minkowski sum of a face of $P$ and a face of $Q$, the map $F\mapsto{F+\psi(F)}$ is a one to one correspondence between the proper faces of $P$ and the proper faces of $P+Q$. By duality, there is a bijection between the normal fan of $P$ and the normal fan of $P+Q$.
\end{proof}

Lemma \ref{Lem.DFP.5.1} concludes to the identity of the normal fans of two polytopes. According to the following result, proven in \cite{Kallay1982}, this situation has a particular meaning in terms of the summands of these polytopes.

\begin{lem}[{\cite[Theorem 4]{Kallay1982}}]\label{Lem.DFP.5.2}
If the normal fans of two polytopes $P$ and $Q$ coincide, then $P$ has a summand homothetic to $Q$.
\end{lem}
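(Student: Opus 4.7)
The plan is to find $\lambda>0$ such that $P=\lambda Q+R$ for some polytope $R$; then $\lambda Q$ is a summand of $P$ homothetic to $Q$. I would translate the question into the language of support functions. For a polytope $R$, set $h_R(c)=\min_{x\in R}c\mathord{\cdot}x$; this function is positively homogeneous, concave, and piecewise linear, with linearity domains the maximal cones of the normal fan of $R$. Conversely, every positively homogeneous concave function that is linear on the maximal cones of a complete polyhedral fan is the support function of some polytope. Since $h_{A+B}=h_A+h_B$, exhibiting the desired $R$ amounts to showing that $f_\lambda:=h_P-\lambda h_Q$ is a support function for some $\lambda>0$.

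Let $\Sigma$ be the common normal fan of $P$ and $Q$. By hypothesis, both $h_P$ and $h_Q$ are linear on each maximal cone of $\Sigma$, hence so is $f_\lambda$ for every $\lambda$, and $f_\lambda$ is automatically positively homogeneous. What remains is to secure concavity of $f_\lambda$, which I would check wall by wall.

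Fix a ridge $\tau$ of $\Sigma$ separating two maximal cones $C_1$ and $C_2$, and let $a_i$ and $b_i$ be the vertices of $P$ and $Q$ respectively associated to $C_i$ for $i\in\{1,2\}$. Continuity of $h_P$ across $\tau$ forces $a_1-a_2$ to be orthogonal to $\tau$, so that $a_1-a_2=\alpha_\tau n_\tau$ for a scalar $\alpha_\tau$ and a fixed normal $n_\tau$ to $\tau$; a direct computation with $h_P(c)=\min\{a_1\mathord{\cdot}c,a_2\mathord{\cdot}c\}$ on $C_1\cup C_2$ shows that the strict concavity of $h_P$ across $\tau$ is equivalent to $\alpha_\tau>0$. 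The analogous scalar $\beta_\tau$ for $Q$ is also strictly positive, because $\Sigma$ is the normal fan of $Q$ and not merely a refinement. The concavity of $f_\lambda$ across $\tau$ then reads $\alpha_\tau-\lambda\beta_\tau\geq 0$.

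Since $\Sigma$ has only finitely many ridges and each $\beta_\tau$ is strictly positive, any $\lambda_0$ with $0<\lambda_0\leq\min_\tau(\alpha_\tau/\beta_\tau)$ makes $f_{\lambda_0}$ concave at every wall, hence globally concave on the ambient space. The polytope $R$ recovered from $h_R=f_{\lambda_0}$ then satisfies $P=\lambda_0 Q+R$, as desired. The main obstacle is the wall-by-wall concavity analysis: one must justify that a positively homogeneous piecewise-linear function that is concave across every codimension-one wall of a complete fan is globally concave, and verify carefully that the strict bending of $h_P$ guaranteed by $\Sigma$ really being the normal fan of $P$ leaves enough room to absorb a small multiple of the bending of $h_Q$ at each ridge simultaneously.
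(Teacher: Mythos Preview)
Your argument is correct; the support-function approach you outline is the standard route to this result, and the wall-by-wall concavity criterion you invoke for piecewise-linear functions on a complete fan is a well-known fact (any failure of concavity can be detected along a line segment, where the induced one-variable function is piecewise linear with breakpoints only at walls).

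There is, however, nothing to compare against in the paper: the authors do not prove Lemma~\ref{Lem.DFP.5.2} at all. It is quoted as a black box from \cite[Theorem~4]{Kallay1982}, and the surrounding text explicitly says that Kallay's theorem gives several equivalent characterizations of two polytopes having the same normal fan, of which only this implication is extracted. Your proposal therefore supplies a self-contained proof where the paper simply cites the literature; in spirit it is the same argument Kallay gives, since his equivalences are likewise phrased in terms of support functions and their bending across the walls of the common fan.
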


Note that Theorem 4 from \cite{Kallay1982} actually provides four statements equivalent to the normal fans of two polytopes coinciding. Lemma \ref{Lem.DFP.5.2} only borrows the part of this theorem that we will make use of here.

\begin{thm}\label{Thm.DFP.5.1}
A polytope $P$ has a summand homothetic to a polytope $Q$ if and only if $P$ and $P+Q$ have the same number of vertices.
\end{thm}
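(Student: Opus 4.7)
The plan is to handle both implications via the normal-fan machinery developed in the preceding lemmas.

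For the forward implication, I assume that $P = R + \lambda Q$ for some polytope $R$ and some $\lambda > 0$ (the translation in the definition of homothety being absorbed into $R$). Then $P + Q = R + (\lambda+1) Q$, and since positive scaling does not alter the normal fan, the normal fans of both $P$ and $P+Q$ equal the common refinement of the normal fans of $R$ and $Q$. In particular $P$ and $P+Q$ have identical normal fans and hence the same number of vertices.

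For the converse, I assume that $P$ and $P+Q$ have the same number of vertices. The injection from the vertex set of $P$ into the vertex set of $P+Q$ supplied by Lemma \ref{Lem.DFP.1.3} is then forced to be a bijection, so Lemma \ref{Lem.DFP.5.1} implies that the normal fans of $P$ and $P+Q$ coincide. Applying Lemma \ref{Lem.DFP.5.2} to the pair $(P, P+Q)$ produces a polytope $S$ and a scalar $\lambda > 0$ (after absorbing any translation into $S$) such that
\[
P = S + \lambda(P+Q) = \lambda P + S + \lambda Q.
\]
A Brunn--Minkowski comparison applied to this identity immediately forces $\lambda \leq 1$, at least when $P$ has positive volume in its affine hull. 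For $\lambda < 1$, I rewrite $P$ as $(1-\lambda)P + \lambda P$, cancel the common summand $\lambda P$ by Minkowski cancellation, and obtain $(1-\lambda)P = S + \lambda Q$; dividing by $1-\lambda$ then exhibits $\frac{\lambda}{1-\lambda} Q$ as a summand of $P$ homothetic to $Q$, as desired.

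The residual case $\lambda = 1$ collapses $S + Q$ to a single point, so $Q$ is itself a point and $P$ trivially has a point summand homothetic to $Q$. I expect the main obstacle to be precisely this extraction step: Lemma \ref{Lem.DFP.5.2} directly yields only a summand of $P$ homothetic to $P+Q$, and the passage to a summand homothetic to $Q$ relies both on Minkowski cancellation and on controlling the homothety ratio through a separate geometric argument.
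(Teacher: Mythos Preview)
Your argument is correct, and the forward implication is handled a bit differently from the paper: the paper writes down an explicit bijection $\alpha u + v \mapsto (1+\alpha)u + v$ between the vertices of $P=\alpha Q+R$ and those of $P+Q=(1+\alpha)Q+R$ via Lemma~\ref{Lem.DFP.1.1}, whereas you argue directly that both polytopes share the common refinement of the normal fans of $R$ and $Q$. Your route is slightly more conceptual and equally short.

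For the converse, you follow the paper exactly up to the point where Lemma~\ref{Lem.DFP.5.2} yields a summand of $P$ homothetic to $P+Q$, but you then work much harder than necessary. Once you have
\[
P = S + \lambda(P+Q) = S + \lambda P + \lambda Q,
\]
simply regroup as $P = (S+\lambda P) + \lambda Q$. Since $S+\lambda P$ is a polytope and $\lambda Q$ is homothetic to $Q$, you are done: this is precisely what the paper means by ``as a direct consequence.'' Your detour through Brunn--Minkowski to bound $\lambda\le 1$, followed by Minkowski cancellation and a case split on $\lambda=1$, is valid but entirely avoidable; the obstacle you anticipated in your final paragraph does not actually arise.
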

\begin{proof}
Assume that $P$ has a summand homothetic to $Q$, that is $P=\alpha{Q}+R$ for some positive number $\alpha$ and some polytope $R$. In this case, Lemma \ref{Lem.DFP.1.1} provides a bijection between the vertex set of $P$ and the vertex set of $P+Q$. Indeed, let $u$ and $v$ be two points in $Q$ and $R$, respectively. By Lemma \ref{Lem.DFP.1.1}, $\alpha{u}+v$ is a vertex of $P$ if and only if there exists a vector $c$ such that the map $x\mapsto{c\mathord{\cdot}x}$ is uniquely minimized at $\alpha{u}$ in $\alpha{Q}$ and at $v$ in $R$. This is equivalent to the map $x\mapsto{c\mathord{\cdot}x}$ being uniquely minimized at $(1+\alpha)u$ in $(1+\alpha)Q$ and at $v$ in $R$. Since $P+Q=(1+\alpha)Q+R$, it follows from Lemma \ref{Lem.DFP.1.1} that the map $\alpha{u}+v\mapsto(1+\alpha)u+v$ is a bijection between the vertices of $P$ and $P+Q$.

Now assume that $P$ and $P+Q$ have the same number of vertices. In this case, the injection provided by Lemma \ref{Lem.DFP.1.3} is a bijection. According to Lemma \ref{Lem.DFP.5.1}, the normal fans of $P$ and $P+Q$ then coincide and, in turn, by Lemma \ref{Lem.DFP.5.2}, $P$ has a summand homothetic to $P+Q$. As a direct consequence, $P$ has a summand homothetic to $Q$, and the proof is complete.
\end{proof}

A weaker version of Theorem \ref{Thm.DFP.5.1} where $P$ is a lattice polytope and $Q$ is lattice segment is used in \cite{DezaDezaGuanPournin2018} in order to enumerate lattice polytopes with given properties. Note that, in the case of lattice polytopes, the summand homothetic to $Q$ in the statement of Theorem \ref{Thm.DFP.5.1} is necessarily homothetic to $Q$ by an integer coefficient, which allows for an convenient enumeration procedure. A consequence of Theorem \ref{Thm.DFP.5.1} is that it makes it possible to check whether a Minkowski difference is possible between $P$ and a polytope homothetic to $Q$ by only computing the vertices $P+Q$ and comparing its number of vertices to that of $P$. Another consequence is that it provides an efficient way to tell whether a lattice polytope $P$ is a zonotope: it suffices to compute the Minkowski sum of $P$ with each of its edges (up to parallelism) and, for each of them, to compare the number of vertices of the resulting polytope with that of $P$. 

\section{Bounds on the diameter of Minkowski sums}\label{sec.DFP.2}

The purpose of this section is to investigate the possible range for the diameter of a Minkowski sum in terms of the diameter and the number of vertices of its summands. In the remainder of the article, the diameter of a polytope $P$ will be denoted by $\delta(P)$. We begin with a general lower bound that only depends on the diameter of the summands.

\begin{thm}\label{Thm.DFP.2.2}
For any two polytopes $P$ and $Q$,
$$
\delta(P+Q)\geq\max\{\delta(P),\delta(Q)\}\mbox{.}
$$
\end{thm}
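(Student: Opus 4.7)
The plan is to transfer a shortest path in $P$ (or in $Q$) into a walk in $P+Q$ by combining Lemma~\ref{Lem.DFP.1.3} and Lemma~\ref{Lem.DFP.1.2}, and then run the argument in reverse so that a short path in $P+Q$ between the images of two far-apart vertices of $P$ projects down to a short walk in $P$. It suffices to prove $\delta(P+Q)\geq\delta(P)$, since the analogous inequality with $Q$ follows by symmetry of the Minkowski sum.

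First, I would invoke Lemma~\ref{Lem.DFP.1.3} to fix an injection $\phi$ from the vertex set of $P$ into the vertex set of $P+Q$ such that, for every vertex $u$ of $P$, the Minkowski decomposition of $\phi(u)$ has $u$ as its $P$-part. Now pick vertices $u$ and $v$ of $P$ whose distance in the graph of $P$ equals $\delta(P)$. In order to bound $\delta(P+Q)$ from below it is enough to prove that
$$
d_{P+Q}\bigl(\phi(u),\phi(v)\bigr)\geq d_P(u,v).
$$

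For this, consider any path $\phi(u)=x^{0},x^{1},\dots,x^{k}=\phi(v)$ in the graph of $P+Q$ of length $k=d_{P+Q}(\phi(u),\phi(v))$. Writing the Minkowski decomposition of each $x^{i}$ as $x^{i}=x^{i}_{P}+x^{i}_{Q}$ (uniquely determined by Lemma~\ref{Lem.DFP.1.1}), one has $x^{0}_{P}=u$ and $x^{k}_{P}=v$ by the defining property of $\phi$. Lemma~\ref{Lem.DFP.1.2} then guarantees that for every $i$ the vertices $x^{i}_{P}$ and $x^{i+1}_{P}$ either coincide in $P$ or are adjacent in $P$. Deleting the consecutive repetitions from the sequence $x^{0}_{P},x^{1}_{P},\dots,x^{k}_{P}$ therefore produces a walk in the graph of $P$ from $u$ to $v$ of length at most $k$, which yields $d_P(u,v)\leq k$, as desired.

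I do not anticipate a serious obstacle: the entire argument is a direct projection onto the $P$-coordinate of the Minkowski decomposition, and every required property of this projection (well-definedness, starting and ending values, edge-to-edge-or-equal behaviour) is packaged in the lemmas already established in Section~\ref{sec.DFP.1}. The only point that needs a brief sentence is the uniqueness of the Minkowski decomposition of a vertex, which is what licenses both the statement $x^{0}_{P}=u$ and the deterministic passage from a path in $P+Q$ to a walk in $P$.
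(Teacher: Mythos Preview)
Your proposal is correct and follows essentially the same route as the paper: use Lemma~\ref{Lem.DFP.1.3} to lift two diametral vertices of $P$ into $P+Q$, then project a geodesic in $P+Q$ back to a walk in $P$ via Lemma~\ref{Lem.DFP.1.2}. The paper's write-up is more terse, but the logical content is identical.
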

\begin{proof}
By Lemma \ref{Lem.DFP.1.3}, there exists an injection $\phi$ from the vertex set of $P$ into the vertex set of $P+Q$ such that, for every vertex $v$ of $P$, the Minkowski decomposition of $\phi(v)$ contains $v$ as one of its two summands. Consider two vertices $u$ and $v$ of $P$ distance of $\delta(P)$ in the graph of $P$. By Lemma \ref{Lem.DFP.1.2}, for any path of length $l$ between $\phi(u)$ and $\phi(v)$ in the graph of $P+Q$, there exists a path of length at most $l$ between $u$ and $v$ in the graph of $P$. As a consequence, the distance between $u$ and $v$ in the graph of $P$ is at most the distance between $\phi(u)$ and $\phi(v)$ in the graph of $P+Q$. Therefore, $\delta(P)\leq\delta(P+Q)$ and, by symmetry, the desired inequality holds.
\end{proof}

The inequality provided by Theorem~\ref{Thm.DFP.2.2} is sharp since $\delta(2P)=\delta(P)$ for any polytope $P$. This inequality is used in \cite{DezaDezaGuanPournin2018} in the case when $Q$ is a line segment, in order to evaluate the diameter of lattice polytopes.

It turns out that there is no upper bound on the diameter of a Minkowski sum only in terms of the diameter of the summands. More precisely we provide a pair of polytopes, each of diameter $2$, whose diameter of the Minkowski sum can grow arbitrarily large. The construction of these polytopes relies on the following proposition, that provides polytopes of any dimension and any diameter.

\begin{prop}\label{Prop.DFP.2.1}
For any two positive integers $d$ and $k$, there exists a polytope of dimension $d$ and diameter $k$.
\end{prop}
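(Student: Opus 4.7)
The plan is to give an explicit construction based on Cartesian products of well-understood polytopes. The key ingredient is that the product $P\times Q$ has dimension $\dim P+\dim Q$ and that its graph is the Cartesian product of the graphs of $P$ and $Q$; consequently $\delta(P\times Q)=\delta(P)+\delta(Q)$. This rests on the standard description of vertices and edges of $P\times Q$: vertices are pairs $(u,v)$ with $u$ a vertex of $P$ and $v$ a vertex of $Q$, and $(u,v)$ is adjacent to $(u',v')$ if and only if exactly one of $u=u'$ or $v=v'$ holds while the other coordinates form an edge of the relevant summand.

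Given this, I would argue by a short case split on $k$ and $d$. If $k=1$, the $d$-simplex $\Delta^d$ has dimension $d$ and diameter $1$, since any two of its vertices are joined by an edge. If $k\geq 2$ and $d=2$, a convex $2k$-gon has dimension $2$ and its graph is the cycle $C_{2k}$, whose diameter is $\lfloor 2k/2\rfloor=k$. Finally, if $k\geq 2$ and $d\geq 3$, set
$$
P_{d,k}\,:=\,C\times\Delta^{d-2},
$$
where $C$ is a convex polygon with $2k-1$ vertices (dimension $2$, diameter $k-1$) and $\Delta^{d-2}$ is the $(d-2)$-simplex (dimension $d-2$, diameter $1$). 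The product formulas then yield $\dim P_{d,k}=2+(d-2)=d$ and $\delta(P_{d,k})=(k-1)+1=k$, as required.

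The only step that is not immediate is the additive formula $\delta(P\times Q)=\delta(P)+\delta(Q)$. I would derive it from the edge description above by observing that any walk in the graph of $P\times Q$ projects onto walks in the graphs of $P$ and $Q$ (giving the lower bound $d_{P\times Q}((u,v),(u',v'))\geq d_P(u,u')+d_Q(v,v')$), and that conversely a shortest path between $(u,v)$ and $(u',v')$ can be realized by concatenating a shortest $u$-to-$u'$ path in $P$ with a shortest $v$-to-$v'$ path in $Q$ while holding the other coordinate fixed. I do not anticipate any serious obstacle beyond this verification; the construction is elementary, and the real content of the proposition is simply matching up dimension and diameter through the right choice of polygon and simplex factors. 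The marginal case $d=1$ only admits $k=1$, which is already covered by the simplex construction.
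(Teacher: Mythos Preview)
Your proof is correct and follows essentially the same strategy as the paper: construct the desired polytope as a Cartesian product and use additivity of the diameter. The paper pairs a polygon with a cube (when $k\geq d-1$) and a simplex with a cube (when $k<d-1$), whereas your choice of polygon $\times$ simplex covers all $k\geq 2$, $d\geq 3$ in a single case and is slightly cleaner.
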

\begin{proof}
We shall distinguish two cases. First assume that $k\geq{d-1}$. Consider a polygon with $2(k-d)+5$ vertices (whose diameter is therefore $k-d+2$) and a $(d-2)$-dimensional cube. Let $P$ be the cartesian product of the polygon and the cube. This cartesian product can be alternatively obtained by taking a prism over the polygon, and then a prism over this prism, and so on until the resulting polytope is $d$-dimensional. Since the diameter of a prism is the diameter of its base plus $1$, the diameter of $P$ is equal to $k$.

Now assume that $k<d-1$. Consider a $(d-k+1)$-dimensional simplex and a $(k-1)$-dimensional cube.  As above, the Minkowski sum $P$ of the simplex and the cube is a $d$-dimensional polytope obtained by taking successive prisms over the simplex. Therefore, as the diameter of a prism is the diameter of its base plus $1$ and as simplices have diameter $1$, the diameter of $P$ is equal to $k$.
\end{proof}

By Proposition \ref{Prop.DFP.2.2}, the diameter of a Minkowski sum of two polytopes can grow arbitrarily large, even if both polytopes have a fixed diameter.

\begin{prop}\label{Prop.DFP.2.2}
For any $d\geq3$ and $k\geq4$, there exist two $d$-dimensional polytopes, both of diameter $2$, whose Minkowski sum has diameter $k$
\end{prop}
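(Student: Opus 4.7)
My plan is to prove Proposition \ref{Prop.DFP.2.2} by constructing $P$ and $Q$ explicitly. The guiding principle is that diameter-$2$ polytopes can be combinatorially rich — pyramids and bipyramids over arbitrary bases have diameter at most $2$, and prisms over simplices have diameter exactly $2$ — so there is ample room for the Minkowski sum to have much larger diameter than either summand while both summands remain of diameter~$2$.

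For the warm-up case $d = 3$, $k = 4$, a natural choice is $P = T_1 \times [0, 1]$ and $Q = T_2 \times [0, 1]$, where $T_1, T_2$ are two triangles in the plane $\{z = 0\} \subset \mathbb{R}^3$ in generic position (no pair of parallel edges). Each summand is a prism over a simplex, hence of diameter~$2$. Their Minkowski sum factorizes as $(T_1 + T_2) \times [0, 2]$; the polygon $T_1 + T_2$ is a generic hexagon of diameter~$3$, so $P + Q$ is a hexagonal prism of diameter~$4$. Lemmas \ref{Lem.DFP.1.1} and \ref{Lem.DFP.1.2} confirm the vertex set and adjacency structure of this sum.

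For general $k \geq 4$ and $d \geq 3$, I would generalize by replacing the triangular prisms with polytopes that remain of diameter~$2$ but carry more combinatorial complexity: pyramids or bipyramids over lower-dimensional polytopes with $\Theta(k)$ vertices, for instance. Alternatively, one could try to reverse-engineer Proposition \ref{Prop.DFP.2.1}: start from a $d$-polytope $R$ of diameter~$k$, and exhibit a decomposition $R = P + Q$ with $\delta(P) = \delta(Q) = 2$. In either approach, Lemma \ref{Lem.DFP.1.1} identifies the vertices of $P + Q$ and Lemma \ref{Lem.DFP.1.2} controls their adjacency; the diameter-$k$ conclusion then follows from exhibiting two vertices of $P + Q$ at graph-distance at least $k$ (lower bound) and constructing an explicit path between any two vertices (upper bound).

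The main obstacle is the general-$k$ construction. For $k = 4$ the triangular-prism example is clean, but for $k \geq 5$ a naive splitting of the Proposition \ref{Prop.DFP.2.1} polytope (a simplex plus a $(k-1)$-cube, say) fails: any straightforward factorization leaves one summand as a long prism over a simplex of diameter larger than~$2$. Overcoming this requires summands with vertex counts growing with~$k$, for which pyramid or bipyramid constructions are the natural candidates — both preserve the diameter bound of~$2$ regardless of how many vertices the base has. The delicate step is then to choose the bases (and their relative positions in $\mathbb{R}^d$) so that the Minkowski sum $P + Q$ realizes diameter \emph{exactly} $k$ rather than overshooting or collapsing, which is verified through explicit analysis of the graph of $P+Q$ via Lemmas \ref{Lem.DFP.1.1} and \ref{Lem.DFP.1.2}.
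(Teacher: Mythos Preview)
Your proposal is a plan rather than a proof: the $d=3$, $k=4$ warm-up is correct and clean, but for general $k$ you only gesture at pyramid/bipyramid constructions without actually specifying one or verifying the diameter of the sum. The paper's proof fills exactly this gap with a single idea you are circling but have not landed on: take $P$ and $Q$ to be pyramids over the \emph{same} $(d-1)$-dimensional base $B$, with apices $p$ and $q$ placed on \emph{opposite} sides of the hyperplane $H$ containing $B$, so that the segment $[p,q]$ passes through the relative interior of $B$. Both $P$ and $Q$ then have diameter $2$ (every base vertex is adjacent to the apex), and the Minkowski sum $P+Q$ is the convex hull of the three parallel copies $p+B$, $2B$, $q+B$; combinatorially it is a prism stacked on either side of $B$, so $\delta(P+Q)=\delta(B)+2$. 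Choosing $B$ of dimension $d-1$ and diameter $k-2$ via Proposition~\ref{Prop.DFP.2.1} finishes the argument.

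The two ingredients you are missing are (i) using a \emph{common} base for both pyramids --- your phrasing ``choose the bases'' suggests you are still allowing them to differ, which makes the sum much harder to analyse --- and (ii) the opposite-side apex placement, which is what forces $P+Q$ to be a transparent double prism rather than an irregular object requiring ad hoc graph analysis. Once those are in place no delicate case-by-case verification is needed; the diameter computation reduces to the prism-adds-one fact you already know. Your reverse-engineering idea (decompose a known diameter-$k$ polytope) is in fact realised by this construction: the double prism over $B$ is one of the Proposition~\ref{Prop.DFP.2.1} polytopes in disguise.
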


\begin{proof}
By Proposition \ref{Prop.DFP.2.1}, there exists a polytope $B$ of dimension $d-1$ and diameter $k-2$. We will think of $B$ as embedded in a hyperplane $H$ of $\mathbb{R}^d$. Consider two points $p$ and $q$ placed in $\mathbb{R}^d\mathord{\setminus}H$ in such a way that the line segment between $p$ and $q$ goes through the relative interior of $B$. Let $P$ and $Q$ be the pyramids over $B$ whose apices are $p$ and $q$. By construction, $P$ and $Q$ both have diameter $2$. Note that $p+B$ and $q+B$ are two translates of $B$ placed in distinct hyperplanes parallel to $H$. The Minkowski sum of $P$ and $Q$ is the convex hull of these two translates of $B$, and of the polytope $2B$ (the Minkowski sum of $B$ with itself) placed between them in a third hyperplane parallel to $H$. In particular all the faces of $p+B$ and $q+B$ are also faces of $P+Q$. Moreover, since the line segment between $p$ and $q$ goes through the relative interior of $B$, all the proper faces of $2B$ are faces of $P+Q$, and all the remaining faces of $P+Q$ are precisely obtained as the convex hull of $x+F$ and $2F$, where $F$ is a face of $B$, and $x$ is equal to $p$ or to $q$. Combinatorially, $P+Q$ can be thought of as a prism on both sides of $B$. Since the diameter of a prism is the diameter of its base plus $1$, the diameter of $P+Q$ is equal to $k$.
\end{proof}

When $d$ is equal to $3$, the construction in the proof of Proposition \ref{Prop.DFP.2.2} consists in considering a convex polygon $B$ with $2k-3$ vertices and two pyramids $P$ and $Q$ over this polygon whose apices are joined by a line segment going through the relative interior of $B$. A property of this construction is that both $P$ and $Q$ have diameter $2$. It would be interesting to know whether a statement similar to that of Proposition \ref{Prop.DFP.2.2} is true with polytopes of smaller diameter.

\begin{qtn}
Does there exist a polytope of diameter $1$ and a polytope of diameter $1$ or $2$ whose Minkowski sum is arbitrarily large?
\end{qtn}

On the one hand, Proposition \ref{Prop.DFP.2.2} shows that there is no finite upper bound on the diameter of a Minkowski sum of polytopes only in terms of the diameter of the summands. In other words, the ratio
$$
\frac{\delta(P+Q)}{\delta(P)\delta(Q)}
$$
can grow arbitrarily large. On the other hand, there is a coarse upper bound for the diameter of $P+Q$ in terms of the number of vertices of $P$ and $Q$, which we denote by $f_0(P)$ and $f_0(Q)$, respectively. Since a geodesic in the graph of $P+Q$ cannot visit a vertex twice, the diameter of $P+Q$ is at most the number of vertices of $P+Q$, which is in turn bounded above by $f_0(P)f_0(Q)$. The main result of this section is the following refined bound, that combines the diameters of $P$ and $Q$ and the number of their vertices.
\begin{thm}\label{Thm.DFP.3.1}
For any two polytopes $P$ and $Q$,
$$
\delta(P+Q)<\min\left\{(\delta(P)+1)f_0(Q),f_0(P)(\delta(Q)+1)\right\}\mbox{.}
$$
\end{thm}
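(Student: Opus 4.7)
The plan is to prove $\delta(P+Q) < (\delta(P)+1)f_0(Q)$; the second term in the minimum will then follow by symmetry, exchanging the roles of $P$ and $Q$. The approach partitions the vertices of $P+Q$ into fibers indexed by the vertices of $P$: for each vertex $w$ of $P$, let $F_w$ denote the set of those vertices of $P+Q$ whose Minkowski decomposition has $w$ as its first summand. By uniqueness of the Minkowski decomposition this is indeed a partition, and $|F_w|\leq f_0(Q)$ for every $w$. Moreover, \lemref{Lem.DFP.1.2} implies that every edge of $P+Q$ either lies entirely in a single fiber, or connects two fibers $F_w$ and $F_{w'}$ for which $\{w, w'\}$ is an edge of $P$.

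The key intermediate step, which I expect to be the main obstacle, will be to show that each fiber $F_w$ is connected in the graph of $P+Q$ and therefore has diameter at most $f_0(Q) - 1$ there. I would prove this through the normal fan. By \lemref{Lem.DFP.1.1}, the vertex $w+g$ belongs to $F_w$ exactly when the interiors of the normal cones of $P$ at $w$ and of $Q$ at $g$ intersect, so the restriction of the normal fan of $Q$ to the open convex cone $\mathrm{int}(N_P(w))$ decomposes it into full-dimensional cells in bijection with the elements of $F_w$. Again by \lemref{Lem.DFP.1.1}, two vertices $w+g_1$ and $w+g_2$ of $F_w$ are adjacent in $P+Q$ precisely when the corresponding cells share a codimension-one wall that meets $\mathrm{int}(N_P(w))$. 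Since $\mathrm{int}(N_P(w))$ is convex, a sufficiently generic segment joining interior points of any two such cells crosses only codimension-one walls of the refinement and thus realises a walk in the fiber adjacency graph, yielding its connectedness.

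For any edge $\{w, w'\}$ of $P$, \lemref{Lem.DFP.1.1} furnishes a face $G$ of $Q$ such that $\{w, w'\}+G$ is a face of $P+Q$, combinatorially a prism over $G$; for every vertex $g$ of $G$, the segment $[w+g, w'+g]$ is a vertical edge of this prism and hence an edge of $P+Q$ linking $F_w$ to $F_{w'}$. With this crossing in hand, the proof concludes as follows. Given vertices $u, v$ of $P+Q$ with first Minkowski summands $u_P$ and $v_P$, take a shortest path $u_P = w_0, w_1, \ldots, w_k = v_P$ in $P$ of length $k \leq \delta(P)$. Starting at $u \in F_{w_0}$ and iterating for $i = 0, \ldots, k-1$, use the connectedness of $F_{w_i}$ to walk from the current vertex to some $w_i + g_i$ admitting the prism-edge crossing into $F_{w_{i+1}}$, then cross in one step to $w_{i+1} + g_i$; after the last crossing, walk inside $F_{w_k}$ to $v$. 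Each of the $k+1$ visited fibers contributes at most $f_0(Q) - 1$ steps and each of the $k$ crossings contributes one step, for a total of at most $(k+1)(f_0(Q) - 1) + k = (k+1)f_0(Q) - 1 \leq (\delta(P)+1)f_0(Q) - 1$, which is the desired strict inequality.
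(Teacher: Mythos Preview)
Your argument is correct and mirrors the paper's proof almost exactly: your fibers $F_w$ are the vertex sets of the paper's graphs $\Gamma_{P,Q}(w)$, your normal-fan argument for their connectedness is \lemref{Lem.DFP.3.1}, your crossing edge between adjacent fibers is one direction of \lemref{Lem.DFP.3.2}, and the final path-count $(k+1)(f_0(Q)-1)+k$ is the same estimate the paper reaches. The only quibble is that $\{w,w'\}+G$ need not literally be a prism over $G$ when $w'-w$ lies in the linear span of $G$ (e.g.\ $G$ a segment parallel to the edge), so the particular segment $[w+g,w'+g]$ can fail to be an edge of $P+Q$; however, the face $\{w,w'\}+G$ is a connected polytope containing vertices in both $F_w$ and $F_{w'}$, so some crossing edge exists regardless and your bound is unaffected.
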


As will be shown in Section~\ref{sec.DFP.4}, this bound is sharp when the diameter of one summand grows large and the other summand is a line segment or a polygon with an arbitrarily large number of vertices. In order to prove Theorem \ref{Thm.DFP.3.1}, we introduce the following family of graphs, whose vertex sets form a partition of the vertices of the Minkowski sum.

\begin{defn}\label{Def.DFP.3.1}
Consider two polytopes $P$ and $Q$. For any vertex $u$ of $P$, call $\Gamma_{P,Q}(u)$ the subgraph induced in the graph of $P+Q$ by the vertices whose Minkowski decomposition is of the form $u+v$, where $v$ is a vertex of $Q$.
\end{defn}

Note that the injection $\phi$ provided by Lemma \ref{Lem.DFP.1.3} is precisely a map that sends each vertex $u$ of $P$ to a vertex of $\Gamma_{P,Q}(u)$. Let us illustrate graphs $\Gamma_{P,Q}(u)$ using the Minkowski sum of a triangle $P$ and a line segment $Q$ depicted in Fig. \ref{Fig.DFP.0}. One can see on the right of the figure that, when $u$ is the purple vertex of $P$, $\Gamma_{P,Q}(u)$ is the graph made up of the line segment at the top of $P+Q$ and its two vertices. When $u$ is the red or the blue vertex of $P$, $\Gamma_{P,Q}(u)$ is made up of a single vertex and no edge; this vertex is the one bottom left of $P+Q$ if $u$ is the red vertex of $P$, and bottom right of $P+Q$ if $u$ is the blue vertex of $P$. Further observe that $\Gamma_{Q,P}(u)$ is the oblique edge on the left of $P+Q$ together with its vertices when $u$ is the yellow vertex of $Q$ and the other oblique edge of $P+Q$ together with its vertices when $u$ is the green vertex of $Q$.

\begin{lem}\label{Lem.DFP.3.1}
Consider two polytopes $P$ and $Q$. For any vertex $u$ of $P$, the graph $\Gamma_{P,Q}(u)$ is connected.
\end{lem}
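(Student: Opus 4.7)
The plan is to recast the question in terms of the normal fans of $P$ and $Q$. By Lemma \ref{Lem.DFP.1.1}, a vertex $v$ of $Q$ contributes a vertex $u+v$ to $\Gamma_{P,Q}(u)$ precisely when the open set of vectors $c$ for which $x\mapsto c\mathord{\cdot}x$ is uniquely minimized at $u$ in $P$ meets the analogous open set for $v$ in $Q$; I will call these sets $C_P(u)$ and $C_Q(v)$. Likewise, two such vertices $u+v$ and $u+v'$ are joined by an edge of $P+Q$ exactly when $C_P(u)$ meets the relative interior of the common wall of $C_Q(v)$ and $C_Q(v')$, which is the set of $c$ for which $x\mapsto c\mathord{\cdot}x$ is minimized on $Q$ exactly at the edge $[v,v']$.

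Given two vertices $u+v$ and $u+v'$ of $\Gamma_{P,Q}(u)$, I would fix witnesses $c\in C_P(u)\cap C_Q(v)$ and $c'\in C_P(u)\cap C_Q(v')$ and join them by the straight segment from $c$ to $c'$. Since $u$ is a vertex of $P$, the set $C_P(u)$ is the relative interior of a full-dimensional convex cone, hence open and convex, so the segment lies entirely in $C_P(u)$. After perturbing $c$ and $c'$ slightly, I may assume the segment meets the normal fan of $Q$ transversally, crossing only codimension-one walls. Traversing it then produces a sequence of chambers $C_Q(v)=C_Q(v_0),\ldots,C_Q(v_k)=C_Q(v')$ in which consecutive ones are separated by a wall corresponding to an edge $[v_i,v_{i+1}]$ of $Q$; at the crossing point, Lemma \ref{Lem.DFP.1.1} asserts that $\{u\}+[v_i,v_{i+1}]$ is an edge of $P+Q$. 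Concatenating these edges yields a path from $u+v$ to $u+v'$ in $\Gamma_{P,Q}(u)$.

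The main obstacle will be the generic-position argument used in the perturbation step: I need to justify that the endpoints $c$ and $c'$ can be jointly chosen so that the connecting segment misses every cone of codimension at least two in the normal fan of $Q$. This is really a dimension count — the union of those cones has dimension at most $d-2$, while the family of candidate segments fills a full-dimensional open subset of $C_P(u)\times C_P(u)$ — but writing it out cleanly, while keeping the segment inside the open cone $C_P(u)$, is the one step that requires genuine care.
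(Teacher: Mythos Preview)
Your proposal is correct and follows essentially the same route as the paper's own proof: both arguments pick witness vectors in the (open, convex) normal cone of $P$ at $u$ lying in the respective chambers of the normal fan of $Q$, connect them by a straight segment, perturb to avoid cones of codimension at least two, and read off a path in $\Gamma_{P,Q}(u)$ from the sequence of chambers and walls traversed. If anything, you are more explicit than the paper about the generic-position step, which the paper dispatches with a one-line remark about perturbing one endpoint.
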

\begin{proof}
Consider the normal cone $N$ of $P$ at $u$. 
By Lemma \ref{Lem.DFP.1.1} the Minkowski sum of $u$ with a face $F$ of $Q$ is a face of $P+Q$ if and only if the normal cone of $Q$ at $F$ is non-disjoint from $N$. Therefore, by Definition \ref{Def.DFP.3.1}, $u+v$ is a vertex of $\Gamma_{P,Q}(u)$ if and only if the normal cone of $Q$ at $v$ is non-disjoint from $N$. Let $v$ and $w$ be two vertices of $Q$ such that $u+v$ and $u+w$ are vertices of $\Gamma_{P,Q}(u)$. Choose a point $p_v$ in the intersection of $N$ and the normal cone of $Q$ at $v$. Similarly, let $p_w$ be a point in the intersection of $N$ and the normal cone of $Q$ at $w$. Since the normal cone of a polytope at a vertex is open and full dimensional, we can assume that the line segment between $p_v$ and $p_w$ does not meet a face of dimension less than $d-1$ in the normal fan of $Q$. This can be achieved by, if needed, perturbing $p_v$ slightly. By construction, when going from $p_v$ to $p_w$ along the line segment that joins these points, one meets a sequence of full-dimensional cones in the normal fan of $Q$, glued along cones of codimension $1$. These cones are the normal cones of $Q$ at the vertices and the edges of a path in the graph of $Q$ from $v$ to $w$. By the convexity of $N$, all of these cones are non-disjoint from $N$. By the above observation, the Minkowski sum of $u$ with the vertices and the edges of the path we found in the graph of $Q$ from $v$ to $w$ form a path from $u+v$ to $u+w$ in $\Gamma_{P,Q}(u)$.
\end{proof}

Lemma \ref{Lem.DFP.3.2} tells how the subgraphs induced by the graphs $\Gamma_{P,Q}(u)$ relate to one another within the graph of $P+Q$.

\begin{lem}\label{Lem.DFP.3.2}
Consider two polytopes $P$ and $Q$. Two distinct vertices $u$ and $v$ of $P$ are adjacent in the graph of $P$ if and only if there exist a vertex of $\Gamma_{P,Q}(u)$ and a vertex of $\Gamma_{P,Q}(v)$ that are adjacent in the graph of $P+Q$.
\end{lem}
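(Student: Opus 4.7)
The plan is to prove each direction separately, appealing to Lemmas~\ref{Lem.DFP.1.1} and~\ref{Lem.DFP.1.2}, which already do almost all of the combinatorial work.

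For the forward direction, I would assume $u$ and $v$ are adjacent vertices of $P$, so that $[u,v]$ is an edge of $P$. By Lemma~\ref{Lem.DFP.1.1} there exists a vector $c$ such that the map $x \mapsto c\mathord{\cdot}x$ is minimized exactly at $[u,v]$ on $P$ and exactly at some face $G$ on $Q$; the same lemma ensures that $X := [u,v]+G$ is a face of $P+Q$. Since the Minkowski decomposition of a vertex of $P+Q$ is unique and $u \neq v$, the vertex set of $X$ partitions into a ``$u$-class'' (vertices of the form $u+w$) and a ``$v$-class'' (vertices of the form $v+w$), each of which is non-empty (which one sees by perturbing $c$ within the minimizing face so as to force the unique minimum on $P$ to be at $u$, respectively at $v$). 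Because $X$ has positive dimension, its graph is connected, so there must be an edge of $X$ joining a vertex from each class. This edge is a face of the face $X$ of $P+Q$, hence an edge of $P+Q$, and its endpoints lie respectively in $\Gamma_{P,Q}(u)$ and $\Gamma_{P,Q}(v)$ by Definition~\ref{Def.DFP.3.1}.

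The converse is immediate from Lemma~\ref{Lem.DFP.1.2}: if a vertex $u+w$ of $\Gamma_{P,Q}(u)$ and a vertex $v+w'$ of $\Gamma_{P,Q}(v)$ are adjacent in the graph of $P+Q$, then $u$ and $v$ either coincide or are adjacent vertices of $P$; the hypothesis $u\neq v$ rules out the former alternative.

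The main subtlety lies in the forward direction: one needs to produce an actual edge of $P+Q$ bridging the two classes, not merely the existence of vertices in both classes. This relies on two ingredients that are standard but worth flagging explicitly, namely the graph-connectedness of a polytope of positive dimension applied to the face $X$, and the fact that every edge of a face of $P+Q$ is itself an edge of $P+Q$. The auxiliary non-emptiness of the two classes is a minor but necessary check, handled by a perturbation of the functional $c$.
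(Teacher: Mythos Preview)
Your proof is correct, and the converse direction is essentially the same as the paper's (you invoke Lemma~\ref{Lem.DFP.1.2}, while the paper re-derives the same conclusion directly from the Minkowski decomposition of the edge via Lemma~\ref{Lem.DFP.1.1}).

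For the forward direction, however, you take a genuinely different route. The paper projects $P+Q$ along the direction of the edge $[u,v]$ onto a hyperplane, so that $\pi(u)=\pi(v)$ becomes a vertex of $\pi(P)$; it then picks a vertex of $\pi(P+Q)$ lying over $\pi(u)$ and observes that its fibre in $P+Q$ is a face of dimension at most~$1$ which, because $u\neq v$, must be an edge joining the two classes. Your argument instead stays inside $P+Q$: you build the face $X=[u,v]+G$ directly from Lemma~\ref{Lem.DFP.1.1}, partition its vertices into the $u$-class and the $v$-class, check both are non-empty, and then appeal to the connectedness of the graph of $X$ to find a bridging edge. Your approach is more combinatorial and avoids the projection machinery; the paper's approach is more geometric and pins down the bridging edge a bit more explicitly. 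Both are short, and neither has a real advantage over the other.

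One small remark on exposition: the phrase ``perturbing $c$ within the minimizing face'' is slightly awkward---what you mean is perturbing $c$ within the normal cone of $[u,v]$ into the interior of the normal cone of $u$ (respectively $v$), which for small perturbations keeps the minimizer on $Q$ inside $G$. Alternatively, non-emptiness of both classes follows without any perturbation: if every vertex of $X$ were in the $u$-class then $X\subseteq u+G$, whence $v+G\subseteq u+G$, contradicting the boundedness of $G$ since $u\neq v$.
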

\begin{proof}
First consider an edge of $P+Q$ between a vertex of $\Gamma_{P,Q}(u)$ and a vertex of $\Gamma_{P,Q}(v)$. This edge is the Minkowski sum of a face of $P$ with a face of $Q$, both of dimension $0$ or $1$. It turns out that the face of $P$ is necessarily the line segment with vertices $u$ and $v$, because $u$ and $v$ are distinct.

Now assume that $u$ and $v$ are adjacent in the graph of $P$. Consider a projection $\pi$ on some linear hyperplane $H$ of the ambient space that sends $u$ and $v$ to the same point. Observe that $\pi(u)$ is a vertex of $\pi(P)$ and consider a vector $c\in\mathbb{R}^d$ such that the map $x\mapsto{c\mathord{\cdot}x}$ is uniquely minimized at $\pi(u)$ in $\pi(P)$. This map is minimized at a face $F$ in $Q$. According to Lemma \ref{Lem.DFP.1.1}, $\pi(u)+F$ is a face of $\pi(P)+\pi(Q)$, and the vertices of this face are precisely the Minkowski sums of $u$ with the vertices of $F$. Hence there exists a vertex of $\pi(P)+\pi(Q)$, obtained as the Minkowski sum of $\pi(u)$ with a vertex, say $\pi(w)$ of $\pi(Q)$. Since Minkowski sums commute with projections, $\pi(u+w)$ is a vertex of $\pi(P+Q)$. Now observe that the face of $P+Q$ whose image by $\pi$ is $u+w$ is either a vertex or an edge. Since $u$ and $v$ are distinct, this face is an edge between a vertex of $\Gamma_{P,Q}(u)$ and a vertex of $\Gamma_{P,Q}(v)$.
\end{proof}

We are now ready to prove Theorem \ref{Thm.DFP.3.1}.

\begin{proof}[Proof of Theorem \ref{Thm.DFP.3.1}]
Consider two vertices of $u$ and $v$ of $P$ such that the largest possible distance,  in the graph of $P+Q$, between a vertex of $\Gamma_{P,Q}(u)$ and a vertex of $\Gamma_{P,Q}(v)$ is exactly $\delta(P+Q)$. Denote by $l$ the distance of $u$ and $v$ in the graph of $P$. We are going to show that the distance, in the graph of $P+Q$, between any vertex of $\Gamma_{P,Q}(u)$ and any vertex of $\Gamma_{P,Q}(v)$, that is $\delta(P+Q)$, is at most $(l+1)f_0(Q)$. Consider a geodesic from $u$ to $v$ in the graph of $P$. Denote by $w^0$ to $w^l$ the vertices along this geodesic in such a way that $w^0=u$, $w^l=v$, and $w^{i-1}$ is adjacent to $w^i$ in the graph of $P$ for all $i$.

According to Lemma \ref{Lem.DFP.3.1}, $\Gamma_{P,Q}(w^i)$ is a connected graph. We will denote the diameter of this graph by $\delta(\Gamma_{P,Q}(w^i))$. By Lemma \ref{Lem.DFP.3.2}, some vertex of $\Gamma_{P,Q}(w^{i-1})$ is adjacent to a vertex of $\Gamma_{P,Q}(w^i)$ in the graph of $P+Q$. Therefore, the largest distance in the graph of $P+Q$ between any vertex of $\Gamma_{P,Q}(u)$ and any vertex of $\Gamma_{P,Q}(v)$, that is the diameter of $P+Q$, is bounded as follows:
\begin{equation}\label{Thm.DFP.3.1.eq.1}
\delta(P+Q)\leq{l+\sum_{i=0}^l\delta(\Gamma_{P,Q}(w^i))}\mbox{.}
\end{equation}

Now observe that $\Gamma_{P,Q}(w^i)$ has at most $f_0(Q)$ vertices. As a direct consequence, its diameter is at most $f_0(Q)-1$, and (\ref{Thm.DFP.3.1.eq.1}) yields
$$
\delta(P+Q)<(l+1)f_0(Q)\mbox{.}
$$

Since $l$ is the distance between two vertices in the graph of $P$, it is bounded above by $\delta(P)$, and we obtain the desired inequality.
\end{proof}

\section{The polytopes $\Xi(k,l)$ and $\tilde{\Xi}(k,l,m)$}\label{sec.DFP.4}

In this section, we describe two families of $3$-dimensional polytopes. The first family, which we will denote by $\Xi(k,l)$, shows that Theorem \ref{Thm.DFP.3.1} is sharp for the Minkoswki sum with a line segment, even when the diameter of the other summand is large. In other words, one can nearly double the diameter of a polytope by taking the Minkowski sum with a line segment. The other family, that will be denoted by $\tilde{\Xi}(k,l,m)$, will show that Theorem \ref{Thm.DFP.3.1} is also sharp for the Minkoswki sum with a polygon, even when both the number of vertices of the polygon and the diameter of the other summand are large.

Consider the $3$-dimensional polytope $\Xi(5,4)$ sketched in Fig. \ref{Fig.DFP.1}. The left of the figure shows $\Xi(5,4)$ from above, and the right of the figure shows it from below. The vertices colored blue are the vertices of a regular decagon $A$. In particular they all belong to $\mathbb{R}^2$, which we think of as a \emph{horizontal} plane. The red vertices are slightly above $\mathbb{R}^2$ and their orthogonal projection on $\mathbb{R}^2$ belongs to every other edge of the decagon. The green vertices are slightly below $\mathbb{R}^2$ and their orthogonal projection on $\mathbb{R}^2$ also belongs to every other edge of $A$, but with the requirement that a red and a green vertex never project on the same edge of $A$. It follows that $\Xi(5,4)$ has vertical facets, sketched in the center of the figure, each with two blue vertices and three other vertices, either all red or all green. The way the vertical facets are glued to the other facets of $\Xi(5,4)$ is indicated by arrows in the figure. The polytope $\Xi(5,4)$ also has two congruent horizontal facets colored grey in Fig. \ref{Fig.DFP.1}, each with $20$ vertices.
\begin{figure}
\begin{centering}
\includegraphics{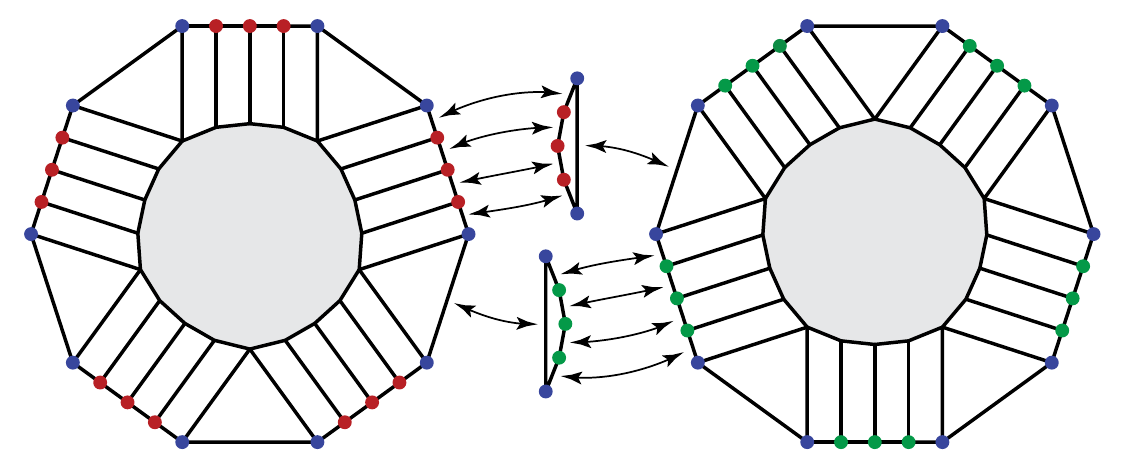}
\caption{The $3$-dimensional polytope $\Xi(5,4)$.}\label{Fig.DFP.1}
\end{centering}
\end{figure}
All the other facets of $\Xi(5,4)$ are either quadrilaterals or isoceles triangles. Each quadrilateral shares an edge with a horizontal facet and an edge with a vertical facet. Each triangle shares a vertex with a horizontal facet and an edge with a vertical facet. Observe that $\Xi(5,4)$ admits a natural generalization. One can define a similar $3$-dimensional polytope whose projection on $\mathbb{R}^2$ is a regular polygon with $2k$ vertices (which we shall also denote by $A$) instead of a decagon, and such that there are $l-1$ red (or green) vertices between two blue vertices, instead of just $3$. The resulting $3$-dimensional polytope, which we will denote by $\Xi(k,l)$, still has two horizontal facets, each with $kl$ vertices. It also has $2k$ vertical facets, each with two blue vertices and $l-1$ red or green vertices. The other facets of $\Xi(k,l)$ are $2k$ isoceles triangles and $2kl$ quadrilaterals. 

\begin{prop}\label{Prop.DFP.4.1}
The diameter of $\Xi(k,l)$ is at most $k+l+2$.
\end{prop}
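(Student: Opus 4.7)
The plan is to use the $2k$ blue vertices of $A$ as hubs and argue that every vertex of $\Xi(k,l)$ is close to the set of blue vertices, while the blue vertices themselves form a low-diameter subgraph of the graph of $\Xi(k,l)$.

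First I would check that the vertices of $A$ form a cycle of length $2k$ inside the graph of $\Xi(k,l)$. The key observation is that two blue vertices $b$ and $b'$ adjacent in $A$ lie together on a common vertical facet, namely the $(l+1)$-gon over the edge $bb'$ of $A$. Because the $l-1$ red (respectively green) vertices of this facet all lie strictly above (respectively below) the line through $b$ and $b'$ in the vertical plane of the facet, the cyclic order on the boundary of the facet places $b$ and $b'$ consecutively, with $bb'$ as one of its $l+1$ edges. Hence $bb'$ is an edge of $\Xi(k,l)$, and the distance between any two blue vertices in the graph of $\Xi(k,l)$ is at most the cycle distance on $A$, namely $k$.

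Next, I would show that every non-blue vertex of $\Xi(k,l)$ is close to some blue vertex. Any such vertex $w$ lies on at least one vertical facet, which is an $(l+1)$-gon containing $l-1$ red or green vertices together with its two blue vertices. Walking along the boundary of this facet from $w$ to the closer of its two blue vertices yields a path of length at most $\lfloor l/2 \rfloor$. Combining these two estimates via the triangle inequality, any pair of vertices $u$ and $v$ of $\Xi(k,l)$ admits blue vertices $b_u$ and $b_v$ at distance at most $\lfloor l/2 \rfloor$ from $u$ and $v$, respectively, and a concatenation of the three paths yields a route of length at most
$$
\lfloor l/2 \rfloor + k + \lfloor l/2 \rfloor \;\leq\; k + l \;\leq\; k + l + 2 \mbox{,}
$$
which is the desired bound.

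The main obstacle is the geometric verification underlying the first step: one has to be sure that the segment $bb'$ between two blue vertices adjacent in $A$ really does survive as an edge of the $(l+1)$-gonal vertical facet, and therefore as an edge of $\Xi(k,l)$. This requires the red and green vertices to be placed close enough to $\mathbb{R}^2$ that the segment $bb'$ remains on the boundary of the vertical facet rather than being cut off from the inside by the other facet vertices. Once this convex-position property has been confirmed from the explicit construction of $\Xi(k,l)$, the combinatorial bookkeeping above finishes the proof.
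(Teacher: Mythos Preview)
Your approach is the same hub-and-spoke argument the paper uses, but there is a genuine gap: your claim that ``any such [non-blue] vertex $w$ lies on at least one vertical facet'' is false. Besides the blue, red, and green vertices, $\Xi(k,l)$ has the $2kl$ grey vertices of the two horizontal facets, and these do \emph{not} lie on any vertical facet. They only meet the vertical facets indirectly, through the quadrilateral facets (each quadrilateral shares an edge with a horizontal facet and an edge with a vertical facet). So a grey vertex is adjacent to a red or green vertex, and from there needs up to $\lfloor l/2\rfloor$ further steps to reach a blue vertex; its distance to the blue cycle is therefore at most $l/2+1$, not $\lfloor l/2\rfloor$.

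This is exactly why the bound is $k+l+2$ and not the $k+l$ you obtain: two grey endpoints cost $(l/2+1)+k+(l/2+1)=k+l+2$. The paper's proof makes precisely this two-step accounting (red/green $\to$ blue in $\leq l/2$; grey $\to$ blue in $\leq l/2+1$; blue $\to$ blue in $\leq k$). Once you insert the missing case of the horizontal-facet vertices, your argument coincides with the paper's. Incidentally, the ``main obstacle'' you flag---that adjacent blue vertices of $A$ remain adjacent in $\Xi(k,l)$---is immediate from the construction, since the $l-1$ non-blue vertices of a vertical facet all lie strictly on one side of the segment $bb'$; the paper simply takes this for granted.
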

\begin{proof}
Observe that the distance in the graph of $\Xi(k,l)$ from a red or green vertex to a blue vertex is at most $l/2$. Since the vertices of the horizontal facets are adjacent to a red or a green vertex, their distance to a blue vertex in the graph of $\Xi(k,l)$ is at most $l/2+1$. As two blue vertices are at distant by at most $k$ in the graph of $A$, we obtain the desired bound.
\end{proof}

\begin{prop}\label{Prop.DFP.4.2}
The Minkowski sum of $\Xi(k,4)$ with a vertical line segment has diameter at least $2k$.
\end{prop}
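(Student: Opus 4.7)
The plan is to exhibit two vertices of $\Xi(k,4)+S$ whose graph distance is at least $2k$, exploiting the fact that the top horizontal facet of $\Xi(k,4)$ has $kl=4k$ vertices and lifts, under the Minkowski sum with $S$, to a $4k$-gon facet of $\Xi(k,4)+S$. Let $a$ and $b$ denote the top and bottom endpoints of $S$, respectively.

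First, I would apply Lemma~\ref{Lem.DFP.1.1} to show that the top horizontal facet $F_{\mathrm{top}}$ of $\Xi(k,4)$ lifts to a facet $F_{\mathrm{top}}+a$ of $\Xi(k,4)+S$: the normal cone of $F_{\mathrm{top}}$ in $\Xi(k,4)$ (for minimization) is a downward-pointing ray, which also lies in the normal cone of $a$ in $S$. Since $F_{\mathrm{top}}+a$ is $2$-dimensional, it is a facet of the $3$-polytope $\Xi(k,4)+S$, combinatorially a $4k$-gon whose boundary is a cycle of $4k$ edges of $\Xi(k,4)+S$. I would then choose two antipodal vertices $u$ and $w$ on this $4k$-cycle, so that their distance within $F_{\mathrm{top}}+a$ is exactly $2k$, which already gives an upper bound of $2k$ on their graph distance in $\Xi(k,4)+S$.

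To match this with the desired lower bound, I would introduce a potential function $\phi$ on the vertices of $\Xi(k,4)+S$ that tracks the horizontal angular position around the axis through the center of the decagon $A$, combined with a ``copy parity'' distinguishing $+a$ from $+b$. Classifying the edges of $\Xi(k,4)+S$ via Lemma~\ref{Lem.DFP.1.1} into (i) horizontal edges of $\Xi(k,4)$ lifted to $+a$ or $+b$ and (ii) ``flip'' edges $v+S$ at blue vertices $v$ whose normal cone in $\Xi(k,4)$ contains a horizontal vector in its interior, one sees that flips contribute no angular progress, while horizontal edges on either $F_{\mathrm{top}}+a$ or $F_{\mathrm{bot}}+b$ contribute an angular progress of at most $\pi/(2k)$. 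Since $u$ and $w$ differ in $\phi$ by $\pi$ and agree in parity, any path between them must contain at least $2k$ edges.

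The main obstacle is to bound uniformly the angular progress per edge, in particular for the edges lying on the thickened vertical pentagon facets $F_{\mathrm{pent}}+S$, whose horizontal angular width can reach $\pi/k$. One would need to argue, by a local exchange on paths, that any use of such a wide-angle edge must be compensated by extra flip edges or by detours through $F_{\mathrm{bot}}+b$, in such a way that the average angular progress per edge along any $u$-to-$w$ path in $\Xi(k,4)+S$ never exceeds $\pi/(2k)$. This would complete the lower bound and, hence, establish the proposition.
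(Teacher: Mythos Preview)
Your proposal contains a genuine gap, and it is precisely the one you yourself flag as ``the main obstacle.'' The blue--blue edges along the regular $2k$-gon $A$ survive in $\Xi(k,4)+S$ (each red-type edge $B_iB_{i+1}$ as $B_iB_{i+1}+b$, each green-type one as $B_iB_{i+1}+a$), and each of them subtends an angle $\pi/k$ at the center---twice your target increment $\pi/(2k)$. So the potential $\phi$, as stated, cannot yield the bound $2k$. Your proposed remedy, a ``local exchange on paths'' forcing every use of a wide-angle edge to be compensated by extra flips or detours, is asserted rather than argued: nothing in the outline explains why such compensation must occur along an \emph{arbitrary} $u$--$w$ path, and without that the lower bound is simply not established.

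The paper sidesteps this difficulty with a different and cleaner device. Instead of a real-valued angular potential, it constructs a combinatorial retraction $\lambda$ from the vertex set of $\Xi(k,4)+\Sigma$ onto the set of blue vertices. After the Minkowski sum each of the $2k$ original blue vertices is doubled into a vertical edge, and the resulting $4k$ blue vertices induce a $4k$-cycle in the graph of $\Xi(k,4)+\Sigma$. The map $\lambda$ is specified locally, facet by facet (Fig.~\ref{Fig.DFP.2}), and extended by the rotational symmetry, so that adjacent vertices go to adjacent or identical blue vertices; this is a finite check. It follows immediately that the graph distance between two antipodal blue vertices is at least their distance in the $4k$-cycle, namely $2k$. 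In effect, $\lambda$ packages the ``compensation'' you were hoping for into a single edge-contracting map onto the blue cycle. Note also that the paper's diameter witnesses are antipodal \emph{blue} vertices rather than antipodal vertices of the top horizontal $4k$-gon; with your choice of $u,w$ you would still owe a verification that they really sit at distance $2k$ in the full graph.
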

\begin{proof}
First observe that taking the Minkowski sum of $\Xi(k,4)$ with a vertical line segment $\Sigma$ does not modify the non-vertical facets of $\Xi(k,4)$, except for a possible translation. The only facets of $\Xi(k,4)$ whose geometry is modified by the Minkowski sum are the vertical ones. In these facets, the blue vertices are replaced by a translate of $Q$. The two vertices of this edge can be understood as two copies of a blue vertex, and will also be referred to as blue vertices.
\begin{figure}
\begin{centering}
\includegraphics{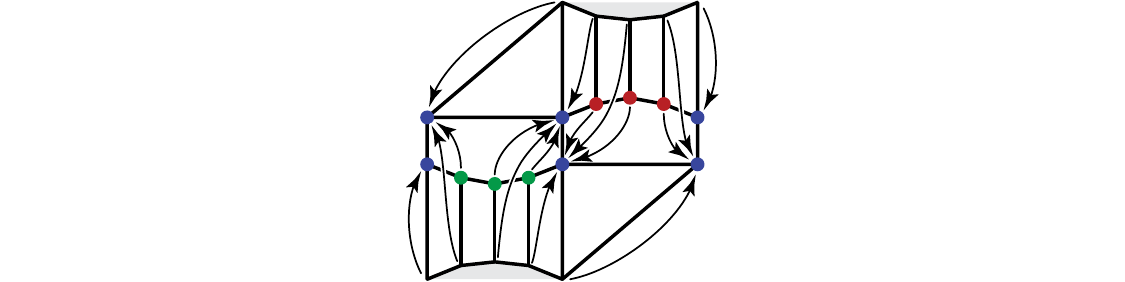}
\caption{The map $\lambda$.}\label{Fig.DFP.2}
\end{centering}
\end{figure}
In particular, the vertical facets of $\Xi(k,4)+\Sigma$ incident to a given blue vertex now share an edge, as shown on Fig. \ref{Fig.DFP.2}. Now consider the map $\lambda$ that sends each blue vertex of $\Xi(k,4)+\Sigma$ to itself and every other vertex of $\Xi(k,4)+\Sigma$ to a blue vertex, as indicated with arrows in Fig. \ref{Fig.DFP.2}. Note that the figure only depicts $\lambda$ next to a pair of vertical facets, but the rest of the map can be recovered using the rotational symmetry of $\Xi(k,4)+\Sigma$. Observe that $\lambda$ maps any two adjacent vertices of $\Xi(k,4)+\Sigma$ to adjacent or identical vertices. In particular, this map will transform a path between two blue vertices in the graph of $\Xi(k,4)+\Sigma$ into a path whose length has not increased between the same two blue vertices. Along the path resulting from the transformation, all the vertices are blue. As a consequence, the distance between two blue vertices can be measured within the cycle induced by blue vertices in the graph of $\Xi(k,4)+\Sigma$. Since this cycle has diameter $2k$, then $\Xi(k,4)$ has diameter at least $2k$.
\end{proof}

Combining Propositions \ref{Prop.DFP.4.1} and \ref{Prop.DFP.4.2} immediately shows that the upper bound provided by Theorem \ref{Thm.DFP.3.1} is asymptotically sharp for the Minkowski sum with a line segment, when the diameter of the other summand grows large.

\begin{thm}\label{Thm.DFP.4.1}
If $\Sigma$ is a vertical line segment, then
$$
\lim_{k\rightarrow\infty}\frac{\delta(\Xi(k,4)+\Sigma)}{\delta(\Xi(k,4))}=2\mbox{.}
$$
\end{thm}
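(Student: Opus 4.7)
The plan is to sandwich the ratio $\delta(\Xi(k,4)+\Sigma)/\delta(\Xi(k,4))$ between two expressions that both tend to $2$ as $k\to\infty$, and to apply a squeeze argument.

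For the upper bound on the ratio, I would invoke Theorem \ref{Thm.DFP.3.1} with $P=\Xi(k,4)$ and $Q=\Sigma$. Since a line segment has exactly two vertices, $f_0(\Sigma)=2$, and the theorem yields
$$
\delta(\Xi(k,4)+\Sigma) < 2\bigl(\delta(\Xi(k,4))+1\bigr).
$$
Dividing both sides by $\delta(\Xi(k,4))$ gives
$$
\frac{\delta(\Xi(k,4)+\Sigma)}{\delta(\Xi(k,4))} < 2+\frac{2}{\delta(\Xi(k,4))}.
$$
For the matching lower bound, I would combine the two propositions just established. Proposition \ref{Prop.DFP.4.1} (specialised to $l=4$) gives $\delta(\Xi(k,4))\leq k+6$, while Proposition \ref{Prop.DFP.4.2} gives $\delta(\Xi(k,4)+\Sigma)\geq 2k$, and together they provide
$$
\frac{\delta(\Xi(k,4)+\Sigma)}{\delta(\Xi(k,4))} \geq \frac{2k}{k+6}.
$$

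The only subtlety worth mentioning is that the upper bound above is useful only if $\delta(\Xi(k,4))$ grows without bound so that $2/\delta(\Xi(k,4))$ vanishes. This is immediate from the same pair of inequalities: plugging $2k\leq\delta(\Xi(k,4)+\Sigma)$ from Proposition \ref{Prop.DFP.4.2} into $\delta(\Xi(k,4)+\Sigma)<2\delta(\Xi(k,4))+2$ forces $\delta(\Xi(k,4))\geq k$, so $\delta(\Xi(k,4))\to\infty$. Letting $k\to\infty$ in both bounds, the left-hand side converges to $2^-$ and the right-hand side to $2^-$, and the theorem follows. There is no substantive obstacle: the statement is essentially a direct combination of the general upper bound of Theorem \ref{Thm.DFP.3.1} with the explicit estimates in Propositions \ref{Prop.DFP.4.1} and \ref{Prop.DFP.4.2}, packaged as a limit.
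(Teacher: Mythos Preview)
Your argument is correct and matches the paper's own approach, which simply says the result follows by combining Propositions~\ref{Prop.DFP.4.1} and~\ref{Prop.DFP.4.2} with Theorem~\ref{Thm.DFP.3.1}. One cosmetic slip: the upper bound $2+2/\delta(\Xi(k,4))$ tends to $2$ from above, not to $2^-$ as you wrote, but this does not affect the squeeze.
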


The polytope $\Xi(k,l)$ is now modified into another polytope whose diameter gets multiplied by the number of vertices of a well-chose polygon (whose number of vertices is arbitrary) under the Minkowski sum with this polygon.
\begin{figure}[b]
\begin{centering}
\includegraphics{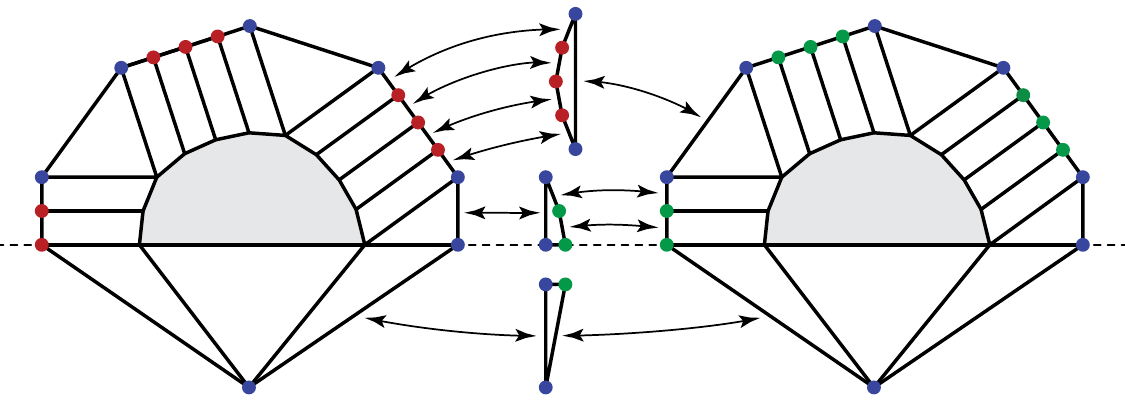}
\caption{The polytope $\Theta(5,4)$.}\label{Fig.DFP.3}
\end{centering}
\end{figure}
The first step of this modification, depicted in Fig. \ref{Fig.DFP.3} when $k=5$ and $l=4$, consists in cutting $\Xi(k,l)$ in half and replacing the removed half by a pyramid over an octogon. The cut is performed along a vertical plane $M$ that contains the center of two opposite edges of $A$. If $l$ is even, which we will assume from now on, then the intersection of $M$ and $\Xi(k,l)$ is an octogon whose vertices are two blue vertices, two red or green vertices, and two vertices of each grey facet. The plane $M$ is depicted as a dashed line in Fig. \ref{Fig.DFP.3} and shows the eight vertices of $M\cap\Xi(5,4)$. Note that, when $k$ is odd, $M\cap\Xi(k,l)$ has exactly one red vertex and one green vertex. When $k$ is even, $M\cap\Xi(k,l)$ has two red vertices or two green vertices depending on which pair of opposite edges of $A$ is cut in half by $M$. Now consider the polytope $\Theta(k,l)$ obtained by replacing the portion of $\Xi(k,l)$ on one side of $M$ by a pyramid over $M\cap\Xi(k,l)$, as shown on Fig. \ref{Fig.DFP.3} when $k=5$ and $l=4$. The apex $a$ of this pyramid, shown at the bottom of the figure, is placed in the horizontal plane $\mathbb{R}^2$ in such a way that the orthogonal projection of $a$ on $M$ is the center of $A$. The orthogonal projection on $\mathbb{R}^2$ of the resulting polytope is now a polygon with $k+3$ vertices. In the following $a$ will be thought of as a blue vertex. The way the vertical facets of $\Theta(k,l)$ are glued to the other facets of $\Theta(k,l)$ is indicated by arrows in Fig. \ref{Fig.DFP.3}. Note in particular that two vertical facets of $\Xi(k,l)$ have been cut in half in the process, and that $\Theta(k,l)$ has two new, right-angled vertical triangular facets incident to $a$.

We will further modify $\Theta(k,l)$ into a polytope $\tilde{\Xi}(k,l,m)$ by glueing small polytopes to the vertical facets that do not have a vertex in $M$. In order to build these polytopes, we will use homothetic translates of the vertical polygon $\Pi$ with $m+1$ vertices depicted on the left of Fig. \ref{Fig.DFP.4} when $m=4$. Let us first describe this polygon. The intersection of $\Pi$ with $M$ is the longest edge of $\Pi$, which we refer to by $e$. As shown on the figure, the vertices $\Pi$ outside of $M$ are not on the same side of $M$ than $a$, and their orthogonal projection on $M$ belongs to the relative interior of $e$.
\begin{figure}[b]
\begin{centering}
\includegraphics{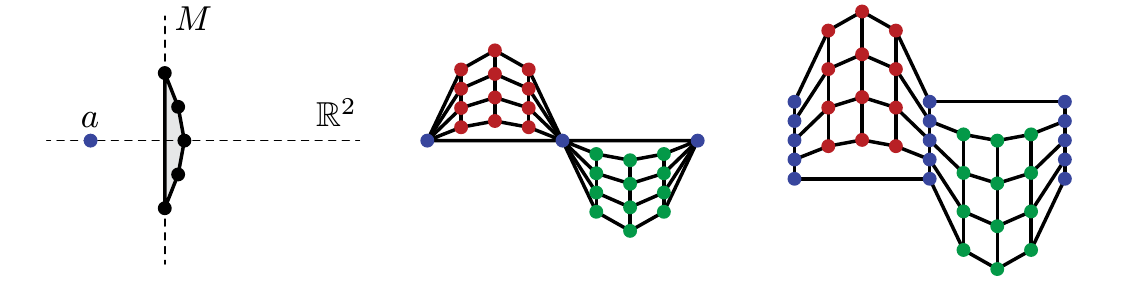}
\caption{The polygon $\Pi$ (left), the polytopes $P_F$ used to build $\tilde{\Xi}(k,l,m)$ from $\Theta(k,l)$ (center), and a sketch of the Minkoswki sum between $\Pi$ and these polytopes (right).}\label{Fig.DFP.4}
\end{centering}
\end{figure}
The largest distance to $M$ of a vertex of $\Pi$ will be denoted by $\varepsilon$. Note that $\varepsilon$ can be arbitrarily small which will be instrumental for the construction of $\tilde{\Xi}(k,l,m)$.

Now consider a vertical facet $F$ of $\Theta(k,l)$ that does not have a vertex in $M$. The announced polytope $P_F$, that we will glue to $F$, will be the convex hull of $F$ and of $l-1$ polygons homothetic to $\Pi$. Consider a red or a green vertex, say $v$, of $F$ and call $e'$ the vertical line segment incident to $v$ whose other vertex is in the horizontal edge of $F$. Denote by $\alpha$ the real number such that $\alpha{e}$ and $e'$ have the same length. We can then translate $\alpha\Pi$ and glue it to $F$ in such a way that $e$ and $e'$ coincide. The polytope $P_F$ is the convex hull of $F$ and of the $l-1$ homothetic translates of $\Pi$ glued to $F$ when $v$ ranges over the red or green vertices of $F$. The projection of $P_F$ back on $F$ is depicted in the center of Fig. \ref{Fig.DFP.4} for two consecutive vertical facets of $\Theta(k,4)$ when $m=5$. Note that the projection is made along the direction orthogonal to $M$. Further note that, apart from two blue vertices, all the vertices of $P_F$ will be colored red or green depending on whether $F$ has red or green vertices. If $\varepsilon$ is small enough, glueing these polytopes to each of the vertical facets of $\Theta(k,l)$ that do not have a vertex in $M$ results in a new polytope $\tilde{\Xi}(k,l,m)$ whose vertex set contains all the vertices of $\Theta(k,l)$, together with $(k-1)(l-1)(m-1)$ new vertices.

 \begin{prop}\label{Prop.DFP.4.3}
The diameter of $\tilde{\Xi}(k,l,m)$ is at most $(k+3)/2+l+2$.
\end{prop}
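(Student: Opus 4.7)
The plan is to follow the strategy of the proof of Proposition \ref{Prop.DFP.4.1}: bound the distance from any vertex of $\tilde{\Xi}(k,l,m)$ to some ``blue'' vertex, bound the diameter of the subgraph induced by the blue vertices, and combine the two. I call a vertex of $\tilde{\Xi}(k,l,m)$ \emph{blue} if it lies on $\mathbb{R}^2$. By construction, these blue vertices are the $k+2$ vertices of the kept portion of $A$ together with the apex $a$, totaling $k+3$ vertices. They project bijectively to the vertices of the orthogonal projection of $\tilde{\Xi}(k,l,m)$ on $\mathbb{R}^2$, a polygon with $k+3$ vertices, and they form a cycle in the graph of $\tilde{\Xi}(k,l,m)$. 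Consequently, any two blue vertices are at distance at most $\lfloor(k+3)/2\rfloor$ in this cycle.

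Next, I would show that every vertex of $\tilde{\Xi}(k,l,m)$ is at distance at most $l/2+1$ from a blue vertex. For vertices inherited from $\Theta(k,l)$, the argument used in Proposition \ref{Prop.DFP.4.1} applies: any red or green vertex reaches some blue vertex in at most $l/2$ steps along a chain of red or green vertices, and any vertex of a grey horizontal facet is adjacent to a red or green vertex and so lies within $l/2+1$ of blue. For each new vertex $n$ introduced by a bump $\Pi_v$ in some $P_F$, the key claim is that $n$ is adjacent in the graph of $\tilde{\Xi}(k,l,m)$ to the anchor $v$ of $\Pi_v$, a red or green vertex of a vertical facet $F$ of $\Theta(k,l)$. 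Since $v$ is at distance at most $l/2$ from a blue vertex, $n$ is at distance at most $l/2+1$ from a blue vertex.

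Combining the two parts by routing any two vertices of $\tilde{\Xi}(k,l,m)$ through the blue cycle yields
$$
\delta(\tilde{\Xi}(k,l,m)) \leq (l/2+1) + \lfloor(k+3)/2\rfloor + (l/2+1) = \lfloor(k+3)/2\rfloor + l + 2,
$$
which gives the desired bound. The main obstacle is the adjacency claim for the new vertices. Since $P_F$ is the three-dimensional convex hull of $F$ and $l-1$ two-dimensional polygons homothetic to $\Pi$, each attached to $F$ along a segment $e'$, when $\varepsilon$ is chosen small enough the bump $\Pi_v$ is a very thin fan glued to $F$ at $e'$. The plan is to verify, by analyzing the triangular belt of transition facets joining $\Pi_v$ and $F$ in the convex hull $P_F$, that each new vertex of $\Pi_v$ is joined by an edge to $v$ in $P_F$, and therefore in $\tilde{\Xi}(k,l,m)$ as well.
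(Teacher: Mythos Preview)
Your overall strategy is exactly the one the paper uses: the blue vertices of $\tilde{\Xi}(k,l,m)$ form a $(k+3)$-cycle, every vertex lies within $l/2+1$ of some blue vertex, and routing through the blue cycle gives the stated bound. The paper's own proof asserts the $l/2+1$ estimate without further comment, so your attempt to justify it for the new vertices of the bumps $P_F$ is the only place where the arguments differ.

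That justification, however, does not go through as written. Your key claim is that each new vertex $n$ of the homothetic copy $\Pi_v$ is adjacent to the anchor $v$ in the graph of $\tilde{\Xi}(k,l,m)$. For $m\ge 3$ this is false. The copy $\Pi_v$ is a convex $(m{+}1)$-gon, and $v$ is one endpoint of its long edge $e'$; any segment from $v$ to a non-neighbouring new vertex of $\Pi_v$ is a diagonal of that polygon and therefore has vertices of $\Pi_v$ strictly on both sides. Consequently no supporting plane of $P_F$ can contain such a segment, and it is not an edge of $P_F$ (hence not of $\tilde{\Xi}(k,l,m)$). The ``triangular belt'' you propose to analyze will instead reveal faces of the form $n_{i,j}\,n_{i,j+1}\,n_{i+1,j+1}\,n_{i+1,j}$ linking \emph{consecutive} copies $\Pi_{r_i}$ and $\Pi_{r_{i+1}}$, not a fan with apex $v$.

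The fix is to argue horizontally rather than vertically. The $(l{-}1)$ copies of $\Pi$ in a single $P_F$ are homothets placed side by side; for each level $j$ the vertices $n_{1,j},\dots,n_{l-1,j}$ form a path in the graph of $P_F$, and at the two ends the cone from the blue vertex $b_1$ (resp.\ $b_2$) over $\Pi_{r_1}$ (resp.\ $\Pi_{r_{l-1}}$) makes $n_{1,j}$ adjacent to $b_1$ and $n_{l-1,j}$ adjacent to $b_2$. Hence every new vertex reaches a blue vertex in at most $\lfloor l/2\rfloor$ steps along its own layer, which is even slightly better than the $l/2+1$ you need. With this correction the rest of your argument stands and matches the paper's.
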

\begin{proof}
We proceed as in the proof of Proposition \ref{Prop.DFP.4.1}. Every vertex in the graph of $\tilde{\Xi}(k,l,m)$ is distant by at most $l/2+1$ of a blue vertex. Since there are $k+3$ blue vertices and these vertices induce a cycle in the graph of $\tilde{\Xi}(k,l,m)$, two of them are distant by at most $(k+3)/2$ in this graph. Therefore, we obtain an upper bound of $(k+3)/2+l+2$ on the diameter of $\tilde{\Xi}(k,l,m)$.
\end{proof}

By Lemma \ref{Lem.DFP.1.1}, when taking the Minkowski sum of $\tilde{\Xi}(k,l,m)$ with the polygon $\Pi$, the only faces whose geometry is affected are the vertical facets of $\tilde{\Xi}(k,l,m)$ and the faces of the polytopes $P_F$ for each of the vertical facets $F$ of $\Theta(k,l)$ that does not contain a vertex in $M$. Consider such a facet $F$ of $\Theta(k,l)$. By construction, each of the facets of $P_F$ is parallel to an edge of $\Pi$. In particular, according to Lemma \ref{Lem.DFP.1.1}, the Minkowski sum with $\Pi$ affects these facets as shown on the right of Fig. \ref{Fig.DFP.4}. Note that each of the blue vertices of $P_F$ will be copied $m+1$ times. Each of these copies will be thought of as a blue vertex. The two vertical facets of $\tilde{\Xi}(k,l,m)$ obtained by cutting in half a vertical facet of $\Xi(k,l,m)$ also each gain exactly $m$ new blue vertices. The vertical triangular facets of $\tilde{\Xi}(k,l,m)$ incident to $a$ are transformed into two quadrilaterals. In particular $a$ gives rise to two copies obtained from the Minkowski sum of $a$ with the edge $e$ of $\Pi$. These copies will both be considered blue vertices.

It follows that $\tilde{\Xi}(k,l,m)+\Pi$ has exactly $k(m+1)+4$ blue vertices that induce a cycle in the graph of $\tilde{\Xi}(k,l,m)+\Pi$. A portion of this cycle is depicted on Fig.~\ref{Fig.DFP.5}. When $l$ is large enough, an argument similar to the one used in the proof of Proposition \ref{Prop.DFP.4.2} will show that the long geodesics in the graph of $\tilde{\Xi}(k,l,m)+\Pi$ will mostly visit a sequence of blue vertices.

\begin{prop}\label{Prop.DFP.4.4}
If $l\geq2m+4$, then the Minkowski sum of $\tilde{\Xi}(k,l,m)$ with $\Pi$ has diameter at least $k(m+1)/2+1$.
\end{prop}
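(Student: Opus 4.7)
The plan is to imitate the retraction argument that established Proposition~\ref{Prop.DFP.4.2}. As observed just before the statement, the graph of $\tilde{\Xi}(k,l,m)+\Pi$ contains a cycle $C$ induced by its $k(m+1)+4$ blue vertices, so the cycle-distance between two antipodal blue vertices of $C$ is at least $\lfloor(k(m+1)+4)/2\rfloor\geq k(m+1)/2+1$. If one can construct a map $\lambda$ from the vertex set of $\tilde{\Xi}(k,l,m)+\Pi$ to itself that fixes every blue vertex, sends every non-blue vertex to a blue vertex, and sends every pair of adjacent vertices to either adjacent or equal blue vertices of $C$, then any path between two antipodal blue vertices in $\tilde{\Xi}(k,l,m)+\Pi$ will project through $\lambda$ to a walk of no greater length along $C$, and the claimed lower bound on $\delta(\tilde{\Xi}(k,l,m)+\Pi)$ will follow.

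To define $\lambda$, I would use the unique Minkowski decomposition of each vertex provided by Lemma~\ref{Lem.DFP.1.1} to associate it with a specific face of $\tilde{\Xi}(k,l,m)$, and retract according to the region this face belongs to. On the two horizontal grey facets and on the pyramid part inherited from $\Theta(k,l)$, $\lambda$ is defined as in Proposition~\ref{Prop.DFP.4.2}, sending every non-blue vertex to its nearest blue vertex along the appropriate vertical direction. On the two vertical facets of $\Theta(k,l)$ that were cut in half by $M$ and on the two triangular facets incident to $a$, $\lambda$ collapses every red or green vertex (together with any new vertex created on these facets by the Minkowski sum with $\Pi$) onto the incident blue copy. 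On a glued block $P_F+\Pi$ attached to a vertical facet $F$ of $\Theta(k,l)$ not meeting $M$, each face of $P_F$ is quilted into the pattern shown on the right of Figure~\ref{Fig.DFP.4}; the $m+1$ copies of each blue vertex of $P_F$ already belong to $C$, and $\lambda$ sends every other vertex inside the block to the unique blue vertex of $C$ lying in the same $\Pi$-column.

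The main obstacle is to show that $\lambda$ never sends a single edge of $\tilde{\Xi}(k,l,m)+\Pi$ to two non-adjacent blue vertices of $C$, and this is precisely where the hypothesis $l\geq 2m+4$ enters. The Minkowski sum introduces slanted edges inside each block $P_F+\Pi$ that traverse several consecutive $\Pi$-columns, and one needs the $l-1$ red or green levels to be numerous enough to absorb the $m-1$ off-$M$ vertices of $\Pi$ at both the top and the bottom of the block, so that no slanted edge can span more than one blue-column worth of horizontal drift. Similar adjacency checks will have to be carried out at the seams between consecutive blocks $P_F+\Pi$ and $P_{F'}+\Pi$, between blocks and the horizontal facets, and between blocks and the half-facets incident to $M$, since these interfaces are where the combinatorics of the various regions of $\tilde{\Xi}(k,l,m)$ interact. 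Once this case-by-case bookkeeping is complete, $\lambda$ is the required retraction and the lower bound $\delta(\tilde{\Xi}(k,l,m)+\Pi)\geq k(m+1)/2+1$ follows from the first paragraph.
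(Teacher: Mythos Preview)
Your overall strategy is exactly the paper's: construct a retraction $\lambda$ from the vertex set of $\tilde{\Xi}(k,l,m)+\Pi$ onto the blue cycle $C$ that sends adjacent vertices to adjacent-or-equal blue vertices, and then read off the lower bound from the diameter of $C$. Your treatment of the horizontal facets, the half-facets along $M$, and the pyramid part also agrees with what the paper does.

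The one place where your sketch diverges from the paper is the definition of $\lambda$ on a block $P_F+\Pi$, and this is precisely where the content lies. Your description ``send every other vertex inside the block to the unique blue vertex of $C$ lying in the same $\Pi$-column'' is not well-posed: the block carries \emph{two} blue vertices of $F$, each producing $m+1$ copies, so a $\Pi$-column contains two blue vertices, not one, and a na\"ive column-wise collapse will send many edges of the block to non-adjacent blue vertices of $C$. The paper's $\lambda$ (Fig.~\ref{Fig.DFP.5}) is instead a \emph{diagonal, layer-by-layer} assignment: the red/green vertices in the block sit in $m+1$ horizontal layers of length $l-1$, each bounded on the left and on the right by a blue vertex; the $j$-th red/green vertex from either end of a given layer is sent to the bounding blue vertex of the $j$-th layer \emph{below} it, until one reaches the layer consisting of the horizontal edge of $\tilde{\Xi}(k,l,m)+\Pi$. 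The hypothesis $l\geq 2m+4$ is exactly what guarantees that the two halves of each row (the ``from the left'' and ``from the right'' assignments) reach that bottom layer without conflicting, so that every edge of the block is sent to an edge or a vertex of $C$. Once you replace your column retraction by this diagonal one, the remaining seam checks you list go through as in the paper.
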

\begin{proof}
We will proceed in the same way as for Proposition \ref{Prop.DFP.4.2}. As already observed above, $\tilde{\Xi}(k,l,m)+\Pi$ has $k(m+1)+4$ blue vertices that induce a cycle in its graph. Hence, we only need to find a map $\lambda$ that takes each vertex of $\tilde{\Xi}(k,l,m)+\Pi$ to a blue vertex in such a way that two adjacent vertices are sent to either adjacent or identical blue vertices.

First consider the facets of $\tilde{\Xi}(k,l,m)+\Pi$ sketched on the right of Fig. \ref{Fig.DFP.4}. The way $\lambda$ affects the vertices of these facets is shown on the left and in the center of Fig. \ref{Fig.DFP.5}. As can be seen, the sketch has been deformed for clarity, which does not matter here since $\lambda$ is a combinatorial object. Observe that the red and green vertices are arranged in layers bounded by a blue vertex on the left and on the right. The number of red or green vertices in each of these layers is $l-1$. There is an additional layer made up of the two blue vertices of an horizontal edge of $\tilde{\Xi}(k,l,m)+\Pi$, shown below the red vertices and above the green vertices. The map $\lambda$ takes the first green or red vertex in a layer (from the left or from the right of the layer) to the blue vertex closest to it. The second green or red vertex in a layer will be sent to the blue vertex closest to it in the next layer and so on. Upon reaching the layer made up of a single vertical edge of $\tilde{\Xi}(k,l,m)+\Pi$, vertices will all be send to the vertex of this edge closest to them in the graph of $\tilde{\Xi}(k,l,m)+\Pi$. If $l\geq2m+4$, then $\lambda$ takes adjacent vertices to either adjacent or identical blue vertices, as desired. Note that, since $l$ is even, there is a vertex in the center of each layer. This vertex can be sent indifferently to one of the vertices of the horizontal edge of $\tilde{\Xi}(k,l,m)+\Pi$ in the last layer.

The map $\lambda$ is sketched on the right of Fig. \ref{Fig.DFP.5} for the vertical facets of $\tilde{\Xi}(k,l,m)$ obtained by cutting a facet of $\Xi(k,l)$ in half. The vertices that belong to $M$ are shown on the right of the figure.
\begin{figure}
\begin{centering}
\includegraphics{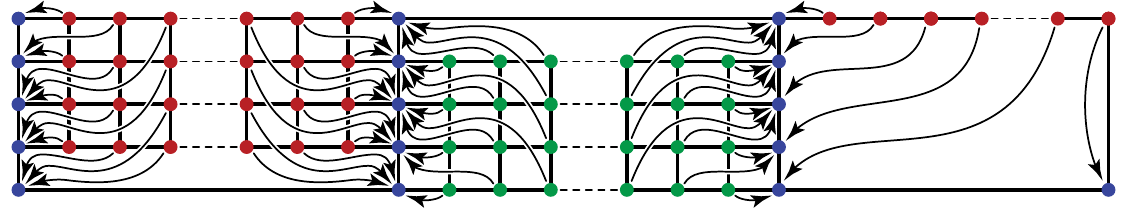}
\caption{The map $\lambda$ for the polytope $\tilde{\Xi}(k,l,m)$.}\label{Fig.DFP.5}
\end{centering}
\end{figure}
Note that $\lambda$ takes the red or green vertex in $M$ to the blue vertex in $M$. Further note that several vertices may be sent to the blue vertex shown on the bottom left of the facet in case $l$ grows large.

It remains to explain where $\lambda$ sends the vertices of the horizontal grey facets. This will be similar to what is shown in Fig. \ref{Fig.DFP.2}. The vertices that do not belong to $M$ will be sent to $\lambda(v)$ if they are adjacent to a red or a green vertex $v$ and to any one of the two blue vertex they are adjacent to otherwise. The four vertices that belong to $M$ will be sent to $a$ if they are adjacent to a red or green vertex and to the blue vertex in $M$ they are adjacent to otherwise.

This defines a map $\lambda$ such that sends any two adjacent vertices of $\tilde{\Xi}(k,l,m)+\Pi$ to either adjacent or identical blue vertices.
\end{proof}

We obtain the following by combining Propositions \ref{Prop.DFP.4.3} and \ref{Prop.DFP.4.4}.

\begin{thm}\label{Thm.DFP.4.2}
If $l\geq2m+4$, then
$$
\lim_{k\rightarrow\infty}\frac{\delta(\tilde{\Xi}(k,l,m)+\Pi)}{\delta(\tilde{\Xi}(k,l,m))}=m+1\mbox{.}
$$
\end{thm}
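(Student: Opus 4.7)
The plan is to sandwich the ratio $\delta(\tilde{\Xi}(k,l,m)+\Pi)/\delta(\tilde{\Xi}(k,l,m))$ between two explicit expressions in $k$ that both tend to $m+1$ as $k\to\infty$, with $l$ and $m$ held fixed and subject to $l\geq2m+4$.

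For the lower bound on the ratio, I would simply combine the two preceding propositions. Proposition~\ref{Prop.DFP.4.4} gives $\delta(\tilde{\Xi}(k,l,m)+\Pi)\geq k(m+1)/2+1$ for the numerator, while Proposition~\ref{Prop.DFP.4.3} gives $\delta(\tilde{\Xi}(k,l,m))\leq(k+3)/2+l+2$ for the denominator. Since $l$ and $m$ are fixed, the quotient of these two bounds tends to $m+1$ as $k\to\infty$.

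For the matching upper bound on the ratio, I would invoke Theorem~\ref{Thm.DFP.3.1} with $P=\tilde{\Xi}(k,l,m)$ and $Q=\Pi$. Because $\Pi$ has exactly $f_0(\Pi)=m+1$ vertices, this yields
$$
\delta(\tilde{\Xi}(k,l,m)+\Pi)<(m+1)\bigl(\delta(\tilde{\Xi}(k,l,m))+1\bigr),
$$
so that
$$
\frac{\delta(\tilde{\Xi}(k,l,m)+\Pi)}{\delta(\tilde{\Xi}(k,l,m))}<(m+1)+\frac{m+1}{\delta(\tilde{\Xi}(k,l,m))}.
$$
To conclude that this upper bound tends to $m+1$, one needs $\delta(\tilde{\Xi}(k,l,m))\to\infty$ as $k\to\infty$; but combining the display above with Proposition~\ref{Prop.DFP.4.4} gives $\delta(\tilde{\Xi}(k,l,m))>k/2-1+1/(m+1)$, so this divergence is immediate. (Alternatively, Proposition~\ref{Prop.DFP.4.3} is not required for the divergence here, only for the sharpness of the lower bound on the ratio.)

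There is essentially no obstacle at this stage: the substantive geometric work has been done in Proposition~\ref{Prop.DFP.4.4}, where the combinatorial map $\lambda$ was used to show that long geodesics in $\tilde{\Xi}(k,l,m)+\Pi$ live in the cycle of $k(m+1)+4$ blue vertices, which is precisely what produces the correct leading coefficient $m+1$. The proof of Theorem~\ref{Thm.DFP.4.2} is then an elementary limit computation sandwiching the ratio between the two bounds above and invoking Theorem~\ref{Thm.DFP.3.1} to secure the upper side.
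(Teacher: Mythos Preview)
Your argument is correct and matches the paper's approach: the paper simply states that the theorem follows by combining Propositions~\ref{Prop.DFP.4.3} and~\ref{Prop.DFP.4.4}, which gives the lower bound on the ratio, with the upper bound coming from Theorem~\ref{Thm.DFP.3.1} (this is implicit in the paper, since the whole point of the construction is to show that Theorem~\ref{Thm.DFP.3.1} is sharp). You have made explicit the one step the paper leaves tacit, namely the appeal to Theorem~\ref{Thm.DFP.3.1} and the verification that $\delta(\tilde{\Xi}(k,l,m))\to\infty$.
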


In other words, the Minkowski sum with $\Pi$ multiplies the diameter of $\tilde{\Xi}(k,l,m)$ by the number of vertices of $\Pi$, even though both of these quantities can grow arbitrarily large. This might come as a surprise. Indeed, while a geodesic in the graph of $\Pi$ never visits more than half of the vertices, the geodesics in the graph of $\tilde{\Xi}(k,l,m)+\Pi$ will visit an arbitrarily large number of copies of each vertex of $\Pi$. This proves that Theorem \ref{Thm.DFP.3.1} is sharp when the diameter of one summand is arbitrarily large, and the other summand is a line segment or an arbitrarily large polygon. Note that, by taking consecutive prisms over $\Xi(k,l)$ and $\tilde{\Xi}(k,l,m)$, one obtains that, for any fixed dimension $d$ greater than $2$, Theorem \ref{Thm.DFP.3.1} is sharp when one summand is $d$-dimensional and its diameter is arbitrarily large, while the other summand is a line segment or an arbitrarily large polygon. Further note that, when both summands have dimension at most $2$, the diameter of their Minkowski sum is better behaved since it is always at most, and can be equal to the sum of the diameters of the two summands.

This begs the question whether Theorem \ref{Thm.DFP.3.1} remains sharp when both summands are high dimensional. More precisely, we ask the following.

\begin{qtn}
Does there exist two polytopes $P$ and $Q$, both of dimension at least $3$ such that $\delta(P)$ and $f_0(Q)$ are arbitrarily large, while the ratio between $\delta(P+Q)$ and $\delta(P)f_0(Q)$ gets arbitrarily close to $1$?
\end{qtn}

\noindent{\bf Acknowledgements.} The authors thank Komei Fukuda for inspiring comments and insights that nurtured this work from the beginning.

\bibliography{MinkowskiDiameters}
\bibliographystyle{ijmart}

\end{document}